\theoremstyle{plain}
\newtheorem{theorem}{Theorem}[section]
\newtheorem{proposition}[theorem]{Proposition}
\newtheorem{lemma}[theorem]{Lemma}
\theoremstyle{definition}
\newtheorem{assumption}[theorem]{Assumption}
\theoremstyle{remark}
\newtheorem{remark}[theorem]{Remark}
\newcommand*\OK{\ding{51}}
\newcommand{\NO}{\text{\ding{55}}}
\newcommand{\eg}{\emph{e.g.}}
\newcommand{\ie}{\emph{i.e.}}
\icmltitlerunning{DADAO: Decoupled Accelerated Decentralized Asynchronous Optimization}
\begin{document}
\doparttoc % Tell to minitoc to generate a toc for the parts
\faketableofcontents % Run a fake tableofcontents command for the partocs

\twocolumn[
\icmltitle{DADAO: Decoupled Accelerated Decentralized Asynchronous Optimization}

% It is OKAY to include author information, even for blind
% submissions: the style file will automatically remove it for you
% unless you've provided the [accepted] option to the icml2023
% package.

% List of affiliations: The first argument should be a (short)
% identifier you will use later to specify author affiliations
% Academic affiliations should list Department, University, City, Region, Country
% Industry affiliations should list Company, City, Region, Country

% You can specify symbols, otherwise they are numbered in order.
% Ideally, you should not use this facility. Affiliations will be numbered
% in order of appearance and this is the preferred way.
\icmlsetsymbol{equal}{*}

\begin{icmlauthorlist}
\icmlauthor{Adel Nabli}{yyy,comp}
\icmlauthor{Edouard Oyallon}{yyy}
%\icmlauthor{Firstname3 Lastname3}{comp}
%\icmlauthor{Firstname4 Lastname4}{sch}
%\icmlauthor{Firstname5 Lastname5}{yyy}
%\icmlauthor{Firstname6 Lastname6}{sch,yyy,comp}
%\icmlauthor{Firstname7 Lastname7}{comp}
%\icmlauthor{}{sch}
%\icmlauthor{Firstname8 Lastname8}{sch}
%\icmlauthor{Firstname8 Lastname8}{yyy,comp}
%\icmlauthor{}{sch}
%\icmlauthor{}{sch}
\end{icmlauthorlist}

\icmlaffiliation{yyy}{Sorbonne Université, CNRS, ISIR, Paris, France}
\icmlaffiliation{comp}{Mila, Concordia University, Montréal, Canada}
%\icmlaffiliation{sch}{School of ZZZ, Institute of WWW, Location, Country}

\icmlcorrespondingauthor{Adel Nabli}{adel.nabli@sorbonne-universite.fr}
%\icmlcorrespondingauthor{Firstname2 Lastname2}{first2.last2@www.uk}

% You may provide any keywords that you
% find helpful for describing your paper; these are used to populate
% the "keywords" metadata in the PDF but will not be shown in the document
\icmlkeywords{Machine Learning, ICML, Convex Optimization, Decentralized Optimization, Asynchronous Optimization}

\vskip 0.3in
]

% this must go after the closing bracket ] following \twocolumn[ ...

% This command actually creates the footnote in the first column
% listing the affiliations and the copyright notice.
% The command takes one argument, which is text to display at the start of the footnote.
% The \icmlEqualContribution command is standard text for equal contribution.
% Remove it (just {}) if you do not need this facility.

\printAffiliationsAndNotice{}  % leave blank if no need to mention equal contribution
%\printAffiliationsAndNotice{\icmlEqualContribution} % otherwise use the standard text.

\begin{abstract}
%EB proposition
%One sentence saying decentralized learning is an important area. One sentence saying that existing algorithms don't achieve true asynchrony, or otherwise saying that there is X issue which we resolve.
  This work introduces DADAO: the first decentralized, accelerated, asynchronous, primal, first-order algorithm to minimize  a sum of $L$-smooth and $\mu$-strongly convex functions distributed over a given network of size $n$.  Our key insight is based on modeling the local gradient updates and gossip communication procedures with separate independent Poisson Point Processes. This allows us to decouple the computation and communication steps, which can be run in parallel, while making the whole approach completely asynchronous. This leads to communication acceleration compared to synchronous approaches. Our new method employs primal gradients and does not use a multi-consensus inner loop nor other ad-hoc mechanisms  such as Error Feedback, Gradient Tracking, or a Proximal operator. By relating the inverse of the smallest positive eigenvalue of the Laplacian matrix $\chi_1$ and the maximal resistance $\chi_2\leq \chi_1$ of the graph to a sufficient minimal communication rate between the nodes of the network, we show that  our algorithm requires $\mathcal{O}(n\sqrt{\frac{L}{\mu}}\log(\frac{1}{\epsilon}))$ local gradients and only  $\mathcal{O}(n\sqrt{\chi_1\chi_2}\sqrt{\frac{L}{\mu}}\log(\frac{1}{\epsilon}))$ communications to reach a precision $\epsilon$, up to logarithmic terms. Thus, we simultaneously obtain an accelerated rate for both computations and communications, leading to an improvement over state-of-the-art works, our simulations further validating the strength of our relatively unconstrained method.%Our source code is released on a public repository.

%DADAO is a novel decentralized asynchronous stochastic first-order algorithm to minimize  a sum of $L$-smooth and $\mu$-strongly convex functions distributed over a time-varying network of size $n$.  We model the local gradient updates and gossip communication procedures with separate independent Poisson Point Processes, decoupling the computation and communication steps and making the whole approach completely asynchronous. Our method  employs primal gradients and does not use a multi-consensus inner loop nor other ad-hoc mechanisms  such as Error Feedback, Gradient Tracking, or a Proximal operator. By relating the inverse of the smallest positive eigenvalue of the Laplacian matrices $\chi_1$ and the maximal resistance $\chi_2$ of a sequence of graphs to a sufficient minimal communication rate between the nodes of the network, we show that  our algorithm requires $\mathcal{O}(n\sqrt{\frac{L}{\mu}}\log(\frac{1}{\epsilon}))$ local gradients and only  $\mathcal{O}(n\sqrt{\chi_1\chi_2}\sqrt{\frac{L}{\mu}}\log(\frac{1}{\epsilon}))$ communications to reach a precision $\epsilon$, up to logarithmic terms. Thus, we improve upon current state-of-the-art works, our simulations validating the strength of our relatively unconstrained method. Our source code is released on a public repository.
\end{abstract}
\section{Introduction}In recent years, the increased amount of available data as well as the proliferation of highly-parallelizable and connected hardware have brought significant changes in the way we process data. These developments have led to the need for efficient and scalable methods for distributed optimization, particularly in the context of machine learning. Indeed, in scenarios where data is distributed across multiple nodes, such as in edge computing or distributed sensor networks, leveraging the local resources of each device is a topic of significant interest. In other settings, such as clusters, spreading the compute load is ideally done to obtain a linear speedup in the number of nodes. In a typical distributed training framework, the goal is to minimize a sum of  functions $(f_i)_{i\leq n}$ split across $n$ nodes of a computer network. A corresponding optimization procedure involves alternating local computations on the nodes and communications along the edges $\mathcal E$ of the network. In the decentralized setting, there is no central machine aggregating the information sent by the workers: nodes are only allowed to communicate with their neighbors in the network. This work addresses simultaneously  multiple limitations of existing decentralized algorithms while guaranteeing fast convergence rates.  
\paragraph{Synchronous lock.} Optimal methods~\cite{scaman17a,kovalev2021lower} have been derived for synchronous first-order algorithms, whose executions are  blocked until all nodes have reached a predefined state (\eg, they must all finish computing local gradients before the round of communication begins), which limits their efficiency in practice as they can heavily be impacted by a few slow nodes or edges in the graph (the \textit{straggler problem}). To tackle the synchronous lock, we rely on the continuized framework \citep{even2021continuized}, itself derived from the randomized gossip model \cite{Boyd2006}. In randomized gossip, nodes update their local values at random times using pairwise communication updates named gossip. Thus, iterates are randomized, labeled with a continuous-time index that only need local clocks to be synchronized at the beginning of the procedure (in opposition to a \textit{global} iteration count that has to be known by all at all time) and performed locally with no regards to a specific global ordering of events. While based on discrete events and thus readily implementable, the continuized framework simplifies the analysis by leveraging continuous proof tools. 

\paragraph{Coupled lock.}However, in \cite{even2021continuized}, gradient and gossip operations are coupled: each communication along an edge first requires the computation of the gradients of the two functions locally stored on the corresponding nodes. As more communication steps than gradient computations are necessary to reach an $\epsilon$ precision, even in an optimal framework \citep{kovalev2021lower, scaman17a}, the coupling leads to an overload in terms of gradient steps. Moreover, coupling computations and communications implies they must be performed \textit{sequentially}, decoupling them allows both tasks to be done in \textit{parallel}, allowing an additional speedup. 

To our knowledge, our work is the first primal method to tackle those locks simultaneously while obtaining accelerated rates for both computations and communications. We propose a novel algorithm (DADAO: Decoupled Accelerated Decentralized Asynchronous Optimization) based on a combination of similar formulations  to \cite{kovalev2021lower,even2021decentralized, hendrikx2022principled} in the continuized framework of \cite{even2021continuized}. We study:
\begin{align}
\inf_{x\in \mathbb{R}^d} \sum_{i=1}^n f_i(x)\,,\label{main}
\end{align}
where each $f_i: \mathbb{R}^d \rightarrow \mathbb R$ is a $\mu$-strongly convex and $L$-smooth function computed in one of the $n$ nodes of a network. We derive a  first-order optimization algorithm that only uses primal gradients and relies on Point-wise Poisson Processes  (P.P.P.s~\citep{last2017lectures}) modeling of the communication and gradient occurrences, leading to accelerated communication and computation rates. Furthermore, our communication bounds rely on the maximal resistance of a graph rather than the largest eigenvalue of a Laplacian, leading to an additional acceleration compared to works which rely on synchrony. Our framework is based on a simple fixed-point iteration and kept minimal: it only involves primal computations with an additional momentum term. Thus, we do not add other cumbersome designs such as the Error-Feedback or Forward-Backward used in \cite{kovalev2021lower} (whose adaptation to asynchronous settings is for now unclear). While we do not consider the delays bound to appear in practice (we assume instantaneous communications and computations), we remove the coupling lock by performing gradient and gossip steps in parallel. Tab. \ref{tab:other_methods} compares DADAO with other approaches and shows it is the only work to achieve accelerated rates both in number of communication and gradients.

\paragraph{Contributions.} \textbf{(1)} We propose a primal algorithm with provable guarantees in the context of asynchronous decentralized learning.  \textbf{(2)} This algorithm is the first to reach accelerated rates for both communications and computations while not requiring ad-hoc mechanisms obtained from an inner loop. \textbf{(3)} We propose a simple theoretical framework compared to concurrent works, we show that our rates are better than previous works, and \textbf{(4)} we illustrate this theoretical comparison numerically.
\paragraph{Structure of the paper.} In Sec. \ref{hyp}, we describe our work hypothesis and our model of a decentralized environment, while Sec. \ref{dynamic_opti} describes our dynamic. Sec. \ref{guarantee} states our convergence guarantees and highlights that the communication and computational rates of our method are better compared to its competitors. Next, Sec. \ref{algo} explains our implementation of this algorithm, and finally, Sec. \ref{num} verifies our claims numerically. All our experiments are reproducible, using PyTorch \citep{Pytorch2019}, our code being online \footnote{\href{https://github.com/AdelNabli/DADAO/}{https://github.com/AdelNabli/DADAO/}}.
\begin{table*}[!ht]
    \centering
    \caption{This table shows the strength of DADAO compared to concurrent works for obtaining $\epsilon$-precision. $n$ is the number of nodes, $|\mathcal{E}|$ the number of edges, $\frac 1{\chi_1}$ the smallest  positive eigenvalue of a fixed weighted Laplacian $\mathcal{L}$, $\rho$ the eigengap and  $\chi_2\leq \chi_1$ the effective resistance. Note that under reasonable assumptions $\sqrt{\chi_1\chi_2}n =\mathcal{O}(|\mathcal{E}|\sqrt{\rho})$ (see Lemma \ref{chi-lemma}). Async., Comm., Grad., M.-C. and Prox. stand respectively for Asynchrony, Communication steps, Gradient steps., Multi-consensus and Proximal operator. As suggested in their respective papers, all the algorithms are run with $\frac{1}{\Vert \mathcal{L}\Vert}\mathcal{L}$. For OGT, we note that the matrix is stochastic, a more precise comparison is given by Prop. \ref{optimality-dadao}.} %To obtain $\epsilon$-precision, multiply the complexities by $\mathcal O (\log \frac 1 \epsilon )$.}
    \label{tab:other_methods}
\resizebox{2.1\columnwidth}{0.2\linewidth}{
\begin{tabular}{l|ccccccc}
Method & Async.&Decoupled&No Inner Loop&Primal& Total &Total\\
&&&(M.-C. or Prox.)& Oracle &\# Comm.&\# Grad.\\
\hline
MSDA \citep{scaman17a} & $\NO$ &$\NO$&$\NO$&$\NO$&$\sqrt{\rho}|\mathcal{E}|\sqrt{\frac{L}\mu}\log \frac 1 \epsilon $&\cellcolor{green!25}$n\sqrt{\frac L\mu}\log \frac 1 \epsilon $\\
DVR  \cite{hendrikx2020dual}&$\NO$&$\NO$&$\NO$&\OK&$\sqrt{\rho}|\mathcal{E}|\sqrt{\frac{L}\mu}\log \frac 1 \epsilon $&\cellcolor{green!25}$n\sqrt{\frac L\mu}\log \frac 1 \epsilon $\\
ADOM+ \citep{kovalev2021lower}& $\NO$  &$\NO$&$\NO$&\OK&$\rho|\mathcal{E}|\sqrt{\frac{L}\mu}\log \frac 1 \epsilon $&\cellcolor{green!25}$n\sqrt{\frac{L}\mu}\log \frac 1 \epsilon $\\
TVR \citep{hendrikx2022principled} & \NO & \OK & $\NO$ & \OK & $\rho|\mathcal{E}|\frac{L}{\mu}\log \frac 1 \epsilon $&$n\frac{L}\mu\log \frac 1 \epsilon $ \\
  %TVR ???(dual) \citep{hendrikx2022principled} & \OK & \OK & \OK & \NO & $\rho|\mathcal{E}|\frac{L}{\mu}\log \frac 1 \epsilon $&$n\frac{L}\mu\log \frac 1 \epsilon $ \\
AGT \citep{li2021accelerated}&$\NO$&$\NO$&$\NO$&\OK&$\sqrt{\rho}|\mathcal{E}|\sqrt{\frac L\mu}\log \frac 1 \epsilon $&\cellcolor{green!25}$n\sqrt{\frac L\mu}\log \frac 1 \epsilon $\\
OGT \cite{song2021optimal}&\NO&\OK&\OK&\OK&$\sqrt{\rho}|\mathcal{E}|\sqrt{\frac{L}\mu}\log \frac 1 \epsilon $&\cellcolor{green!25}$n\sqrt{\frac L\mu}\log \frac 1 \epsilon $\\
 ESDACD \cite{hendrikx2019accelerated}&\OK&\NO&\OK&\NO&\cellcolor{green!25}$\sqrt{\chi_1\chi_2}n\sqrt{\frac{L}\mu}\log \frac 1 \epsilon $&$\sqrt{\chi_1\chi_2}n\sqrt{\frac{L}\mu}\log \frac 1 \epsilon $\\
 Continuized \citep{even2021continuized}& \OK&$\NO$&\OK&$\NO$&\cellcolor{green!25}$\sqrt{\chi_1\chi_2}n\sqrt{\frac{L}\mu}\log \frac 1 \epsilon $&$\sqrt{\chi_1\chi_2}n\sqrt{\frac{L}\mu}\log \frac 1 \epsilon $& \\
  
 DADAO (ours) & \OK  &\OK&\OK&\OK&\cellcolor{green!25}$\sqrt{\chi_1\chi_2}n\sqrt{\frac{L}\mu}\log \frac 1 \epsilon $&\cellcolor{green!25}$n\sqrt{\frac{L}\mu}\log \frac 1 \epsilon $
\end{tabular} }
\end{table*}

\paragraph{Notations:}  $f=\mathcal{O}(g)$ means there is a constant $C>0$ such that $|f|\leq C|g|$, $\{e_i\}_{i\leq d}$ is the canonical basis of $\mathbb{R}^d,d\in \mathbb{N}$, $\mathbf{1}$ is the vector of 1, $\mathbf{I}$ the identity, $A^+$ is the pseudo-inverse of $A$. We further write $\mathbf{e}_i\triangleq e_i\otimes \mathbf{I}$.

\section{Related Work}\paragraph{Continuized and asynchronous algorithms.} We highly rely on the elegant continuized framework \citep{even2021continuized}, which allows obtaining simpler proofs and brings the flexibility of asynchronous algorithms. We reemphasize that identically to \citep{even2021continuized}, the result of this paper is \emph{a stochastic discrete algorithm with a continuous proof:} our proof framework is not based on the discretization of an Ordinary Differential Equation (ODE) but rather studies \emph{the evolution of a Stochastic Differential Equation (SDE) with jumps}. However, by contrast to \citep{even2021continuized}, in our work, we significantly reduce the necessary amount of gradient steps compared to \cite{even2021continuized} while maintaining the same amount of activated edges. Another type of asynchronous algorithm can also be found in \cite{latz2021analysis}, yet it fails to obtain Nesterov's accelerated rates for lack of momentum. We note that \cite{leblond2018improved} studies the robustness to delays yet requires a shared memory and thus applies to a different context than decentralized optimization. \cite{hendrikx2022principled} is a promising approach for modeling random communication on graphs yet fails to obtain acceleration in a neat framework without inner loops.

\paragraph{Decentralized algorithms with fixed topology.} \cite{scaman17a} is  the first work to derive an accelerated algorithm for decentralized optimization, and it links the convergence speed to the Laplacian eigengap. The corresponding algorithm uses a dual formulation and a Chebychev acceleration (synchronous and only for fixed topology). Yet, as stated in Tab. \ref{complexity-graph}, it still requires many edges to be activated. Furthermore, under a relatively flexible condition on the intensity of our P.P.P.s, we show that our work improves over bounds that depend on the spectral gap. An emerging line of work following this formulation employs the continuized framework \citep{even2020,even2021continuized,even2021decentralized}, but unfortunately do not use a primal oracle, as they rely on the gradients of the Fenchel conjugate. Finally, we note that the work of \cite{even2021decentralized} incorporates delays in their model, showing that, with some adaptation, the continuized methods in \citep{even2021continuized} still converge at a linear rate. Yet transferring this robustness to our setting remains unclear. Reducing the number of communication has been studied in \cite{mishchenko2022proxskip}, but without obtaining accelerated rates. \cite{ADFSSiam2021} allows for fast communication and gossip rates yet requires a proximal step and synchrony between nodes to apply a momentum variable.

%\paragraph{Decentralized algorithms with varying topology.}\cite{nedic2017achieving, koloskova2020unified, kovalev2021lower,li2021accelerated} are some of the first works to propose a framework for decentralized learning in the context of varying topology. However, as observed by \cite{gorbunov2022recent,rogozin2022decentralized,KovalevNEURIPS2020}, accelerated communication rates are impossible to achieve in synchronous and deterministic frameworks. Furthermore, they rely on inner loops propagating variables multiple times through a network, which imposes complete synchrony and communication overheads. In addition, as noted empirically in \cite{lin2015universal}, inner-loops lead to a plateau effect. Furthermore, we note that \cite{kovalev2021accelerated,salim2021optimal} employ a formulation derived from \cite{salim2020dualize,condat2022distributed}, casting decentralized learning as a monotonous inclusion, obtaining a linear rate thanks to a preconditioning step of a Forward-Backward-like algorithm. However, being sequential by nature, these types of algorithms are not amenable to a continuized framework.
\paragraph{Finite sum acceleration.} If each local function $f_i$ is a sum of elementary functions $\sum_{j=1}^mf_{i,j}$ with a favorable conditionning, an additional acceleration is possible, as observed by \cite{ADFSSiam2021,hendrikx2020dual,hendrikx2022principled}, which is a different problem from Eq. \ref{main}. This can be viewed as a cluster of nodes with infinite connectivity and an efficient decentralized algorithm should automatically adapt to such structure. Thus, we focused on the setting $m=1$, which allows a fair comparison with these works.

\paragraph{Error feedback/Gradient tracking.} A major lock for asynchrony is the use of Gradient Tracking (GT)~\citep{koloskova2021improved,nedic2017achieving,li2021accelerated} or Error Feedback~\citep{stich2019error,kovalev2021accelerated}. Indeed, gradient operations are locally tracked by a running-mean variable which must be synchronously updated at each gradient update (in GT, the sum of this distributed variable keeps track of the gradient of the objective function), making it incompatible with an asynchronous framework. \citep{zhang2019fully} uses a GT-\textit{like} procedure, which do not verify this property, at the cost of using a buffer (increasing memory requirements) and worse convergence rates (\eg, not accelerated). Furthermore, acceleration requires an undesirable multi-consensus inner loop. We emphasize that \cite{song2021optimal} allows to decouple the gradient updates from communication, yet the framework is still synchronous, leading to synchronous communication rates, that asynchrony can improve (see Tab. \ref{tab:other_methods}).% is mandatory.% to obtain accelerated rates.%, which again is not desirable.

\paragraph{Decoupling procedures.}Decoupling subsequent steps of optimization procedures traditionally leads to speed-ups~\citep{ADFSSiam2021,hendrikx2022principled,pmlr-v119-belilovsky20a,belilovsky2021decoupled}. This contrasts with methods which couple gradient and gossip updates, so that they happen in a predefined order, i.e., simultaneously~\citep{even2021continuized} or sequentially~\citep{kovalev2021lower,koloskova2020unified}. In decoupled optimization procedures, inner-loops are not desirable as they require an external procedure that can be potentially slow and need a block-barrier instruction during the algorithm's execution (e.g., \cite{ADFSSiam2021}). It means in particular that it is preferrable to avoid approaches such as Catalyst~\cite{lin2015universal}, multi-consensus steps~\cite{kovalev2021adom} or Tchebychev acceleration of consensus~\cite{scaman17a}.

\paragraph{Resistance of a graph.} The maximal resistance of a graph is a widely studied quantity, particularly in physics~\cite{klein1993resistance,vos2016methods,klein2002resistance}, as it is a refined geometric invariant of graphs. The resistance of a graph corresponds to the commute time of a Markov Chain~\cite{chandra1996electrical}. However, beyond the Continuized framework \cite{even2021continuized} or acceleration of consensus problems \cite{can2022randomized,8308701,BoydResistance}, we are unaware of generic, asynchronous, accelerated decentralized optimization procedures that rely on this quantity. Also, it has a more physical interpretation than the Laplacian's norm, as it can be computed via Ohm's and Kirchhoff's Circuit Laws (see ~\cite{chandra1996electrical}). %\cite{Boyd2006} proposes an efficient minimization procedure to minimize the sum of the resistances of a graph via SDP. W

%\paragraph{Notations}For a positive semi-definite matrix $A$, $\Vert x\Vert_A\triangleq x^\mathsf{T}Ax$, $f=\mathcal{O}(g)$ means there is a constant $C>0$ such that $|f|\leq C|g|$, $\{e_i\}_{i\leq d}$ is the canonical basis of $\mathbb{R}^d,d\in \mathbb{N}$, $\mathbf{1}$ is the vector of 1, $\mathbf{I}$ the identity, $A^+$ is the pseudo-inverse of $A$ and for a smooth convex function $F$, $d_F(x,y)\triangleq F(x)-F(y)-\langle \nabla F(y),x-y\rangle$ is its Bregman divergence. We further write $\mathbf{e}_i\triangleq e_i\otimes \mathbf{I}$.
\section{Accelerated Asynchronous Algorithm}
\subsection{Gossip Framework}\label{hyp}
We consider the problem defined by Eq. \ref{main} in a distributed environment constituted by $n$ nodes whose dynamic is indexed by a continuous time index $t\in\mathbb{R}^+$. Each node has a local memory and can compute a local gradient $\nabla f_i$, as well as elementary operations, in an instantaneous manner. As said above, having no delay is less realistic, yet adding them also leads to significantly more difficult proofs whose adaptation to our framework remains largely unclear. Next, we will assume that our computations and gossip result from independent homogeneous P.P.P. with no delay. For the sake of simplicity, we assume that all nodes can compute a gradient at the same rate:
\begin{assumption}[Homogeneous gradient computations]\label{ass1}
The gradient computations are normalized to fire independently at a rate of 1 computation per time unit. For the $i$-th worker, we write $N_i(t)$ the corresponding P.P.P. of rate $1$, as well as $\mathbf{N}(t)=(N_i(t))_{i\leq n}$.
\end{assumption}
\begin{remark}
    The P.P.P $N_i(t)$ on node $i$ means that taking gradient steps at $i$ are discrete events, but the time intervals between two events is a random variable following an exponential law of parameter $1$. Thus, the expected waiting time between two gradient steps on the $i$-th worker is $1$ time unit.
\end{remark}
Next, we model the bandwidth of each connection.  For an edge $(i,j)$ belonging to $\in \mathcal{E}$, the set of edges of a graph we assume connected \eqref{ass2}, we write  $M_{ij}(t)$ the P.P.P. with rate $0 < \lambda_{ij}< \infty$. When this P.P.P. fires, both nodes share and update their local memories. The rate $\lambda_{ij}$ is adjustable locally by  machine $i$ while $\lambda_{ji}$ is controlled by machine $j$. While $\lambda_{ij}$ and $\lambda_{ji}$ may be different, we highlight that the communication process is symmetric, $\ie$~both nodes update their local memories when either one of the two corresponding P.P.Ps fires. Thus, in the corresponding undirected graph $\bar{\mathcal{E}}$, each edge $(i,j)$ will fire at a rate of $\lambda_{ij} + \lambda_{ji}$. Given our notations, if $(i,j)\not\in \mathcal{E}$, then the connection between $(i,j)$ can be thought as a P.P.P. with intensity 0. Taking the $\lambda_{ij}$ as edge weights, we introduce the subsequent graph Laplacian, which is the  expected Laplacian of our graph:
$$\Lambda \triangleq\sum_{(i,j)\in \mathcal{E}}\lambda_{ij}(e_i-e_j)(e_i-e_j)^\mathsf{T}\,.$$ We write $\mathbf{\Lambda}\triangleq\sum_{(i,j)\in \mathcal{E} }\lambda_{ij}(\mathbf{e}_i-\mathbf{e}_j)(\mathbf{e}_i-\mathbf{e}_j)^\mathsf{T}$ its tensorized counter-part that will be useful for our Lyapunov-based proofs. Following \cite{scaman17a}, we will compare this quantity to the following projector:
$$\pi\triangleq\mathbf{I}-\frac 1n\mathbf{1}\mathbf{1}^\mathsf{T}=\frac 1{2n}\sum_{1 \leq i,j \leq n}(\mathbf{e}_i-\mathbf{e}_j)(\mathbf{e}_i-\mathbf{e}_j)^\mathsf{T}\,.$$

In this context, a natural quantity is the algebraic connectivity of our network given by \cite{kovalev2021lower}:
$$\chi_1\triangleq \sup_{\Vert x\Vert=1, x\perp \mathbf{1}} \frac{1}{x^\mathsf{T}\Lambda x}\,.$$

We might also write $\chi_1[\Lambda]$ to avoid confusion, depending on the context. 

Next, the maximal effective resistance of the network, as in \cite{even2021continuized,ellens2011effective}, is:
$$\chi_2\triangleq\frac {1}2\sup_{(i,j)\in \mathcal{E} }(e_i-e_j)^\mathsf{T}\Lambda^{+}(e_i-e_j)\,.$$

A standard quantity \cite{scaman17a}, which is used to control the number of synchronous gossips steps, is the spectral gap, given by:
$$\rho\triangleq \Vert \Lambda\Vert \chi_1\,.$$

We also introduce the value $\kappa$, the ratio of communication frequency between the fastest and slowest edges:
$$\kappa\triangleq \frac{\sup_{(i,j)\in \mathcal{E}}\lambda_{ij}+\lambda_{ji}}{\inf_{(i,j)\in \mathcal{E}}\lambda_{ij}+\lambda_{ji}}\,.$$

This ratio is typically bounded, for instance, in the case of a graph with constant edge weights or for $\lambda_{ij}=\frac 1{d_i}$ with $d_i$ the degree of the $i$-th node in a bounded degree graph or a regular graph.

We prove the following Lemma (proved in Appendix \ref{proof-chi-lemma}), which is useful to control $\chi_1,\chi_2$ and compare our bounds with works that rely on the spectral gap of a graph:
\begin{lemma}[Effective resistance]\label{chi-lemma}The spectrum of $\Lambda$ is non-negative. Also, we have $\chi_1=+\infty$ iff $\bar{\mathcal{E}}$ is not a connected graph. If the graph is connected, then:
$$\frac{n-1}{\mathrm{Tr}\, \Lambda}\leq \chi_2\leq  \chi_1\,.$$
Furthermore, we also have the following:
$$\sqrt{\chi_1\chi_2}\mathrm{Tr}\, \Lambda\leq  \sqrt{\rho}\sqrt{\kappa n |\bar{\mathcal{E}}|}\,.$$
\end{lemma}
The last part of this Lemma indicates that our method requires less communications than methods depending on the spectral gap when no degenerated behavior on the graph's connectivity happens, \ie~ when $\kappa$ is adequately bounded, which is a standard assumption~\cite{NEURIPS2019_Hendrix}. 
\begin{remark}
    For synchronous frameworks \cite{kovalev2021lower, scaman17a}, the spectral quantity extracted from their gossip matrix is a given measure of the connectedness of their graphs that is used afterwards to deduce the right number of synchronous communication rounds between two gradient steps. In our framework, $\Lambda $ directly contains the information of both the topology $\mathcal{E}  $ and the edge communication rates $\lambda_{ij} $, thus $\chi_1[\Lambda]$ must rather be understood as an indicator of how well the graphs connectedness and the chosen communication strategy interact. In fact, we will see later that there is a condition on $\chi_1$, $\chi_2$ for DADAO to converge, see Remark \ref{PhysicalInterpretation} for further discussion.
\end{remark}

\begin{assumption}[Connected graph]\label{ass2}
    The set of edges $\mathcal E$ defines a connected graph such that $\chi_1[\Lambda] <  \infty$.
\end{assumption}

\subsection{Dynamic to optimum}\label{dynamic_opti}
Next, we follow a standard approach~\citep{KovalevNEURIPS2020, kovalev2021lower,salim2022optimal,hendrikx2022principled} for solving Eq. \ref{main} (see Appendix \ref{saddle-point-reform} for details), leading to studying, for $0<\nu<\mu$, the following Lagrangian:
\begin{align*}
\inf_{x \in \mathbb{R}^{n \times d}}\sup_{\substack{y\in \mathbb{R}^{n \times d} \\ z\in \mathbb{R}^{n \times d}}} &\sum_{i=1}^nf_i(x_i) -\frac \nu 2\Vert x\Vert^2- \langle x,y\rangle -\frac 1{2\nu}\Vert \pi z+ y\Vert^2.
\end{align*}
For $f(x)=\sum_{i=1}^n f_i(x_i)$, the saddle points $(x^*,y^*,z^*)$ of the above Lagrangian are given by:
\begin{equation}\label{optimal}
\left\{
    \begin{array}{ll}
\nabla f(x^*)-\nu x^*-y^*&=0\\
y^*+\pi z^*+\nu x^*&=0\\
\pi z^*+\pi y^*&=0\,.\\
\end{array}
\right.
\end{equation}
%$$
%\left\{
%\begin{align}
%\nabla F(x^*)-y^*&=0\\
%\frac{y^*+\pi z^*}\nu+x^*&=0\\
%\pi z^*+\pi y^*&=0
%\end{align}
%\right.\Longleftrightarrow %A\begin{pmatrix}x^*\\y^*\\z^*\end{pmatri%x}=0
%$$
Our algorithm is based on a fixed-point algorithm to obtain those saddle points, which is a similar idea to \cite{kovalev2021accelerated}, which is only restricted to a setting without communication acceleration. Furthermore, contrary to \cite{kovalev2021lower}, we do not employ  a Forward-Backward algorithm, which requires both an extra-inversion step and additional regularity on the  considered proximal operator.  Not only does this condition not hold in this particular case, but this is not desirable in a continuized framework where iterates are not ordered in a predefined sequence and require a local descent at each instant. Another major difference is that no Error-Feedback is required by our approach, which allows unlocking asynchrony while simplifying the proofs and decreasing the required number of communications. Instead, we show it is enough to incorporate a standard fixed point algorithm, \textit{without any specific preconditioning} (see \cite{condat2019proximal}). We consider the following dynamic:
\begin{align}
dx_t &=\eta(\tilde x_t-x_t)dt-\gamma (\nabla f(x_t)-\nu x_t-    \tilde y_t)\,d\mathbf{N}(t) \nonumber\\%\label{G1}\\
d \tilde x_t&=\tilde\eta( x_t-\tilde x_t)dt-\tilde \gamma(\nabla f(x_t)-\nu x_t-    \tilde y_t)\,d\mathbf{N}(t) \nonumber\\%\label{G2}\\
d \tilde y_t &= -\theta(y_t+z_t+\nu \tilde x_t)dt\nonumber\\
&\quad+  (\delta +\tilde \delta)(\nabla f(x_t) -\nu x_t - \tilde y_t) d\mathbf{N}(t)\nonumber\\%\label{G3}\\
 d y_t &=\alpha (\tilde y_t - y_t)dt  \label{dynamic} \\
 dz_t &=\alpha(\tilde z_t - z_t) dt\nonumber \\
&\quad- \beta\textstyle\sum\limits_{(i,j)\in \mathcal{E} } (\mathbf{e}_i-\mathbf{e}_j)(\mathbf{e}_i-\mathbf{e}_j)^\mathsf{T}(y_t+z_t )dM_{ij}(t)\nonumber\\
 d \tilde z_t &=\tilde \alpha(z_t - \tilde z_t) dt\nonumber\\
&\quad-  \tilde\beta\textstyle\sum\limits_{(i,j)\in \mathcal{E} } (\mathbf{e}_i-\mathbf{e}_j)(\mathbf{e}_i-\mathbf{e}_j)^\mathsf{T} (y_t+z_t)dM_{ij}(t) \, ,\nonumber
\end{align}where $\nu,\tilde\eta,\eta,\gamma,\alpha,\tilde\alpha,\theta,\delta,\tilde\delta,\beta,\tilde\beta$ are undetermined real-valued parameters. As in \cite{nesterov2003introductory}, variables are paired to obtain a Nesterov acceleration. The variables  $(x,y)$ allow decoupling the gossip steps from the gradient steps using independent P.P.P.s. Furthermore, the Lebesgue integrable path of $\tilde y_t$ does not correspond to a standard momentum, as in a continuized framework~\citep{even2021continuized}; however, it turns out to be a crucial component of our method. Compared to \cite{kovalev2021lower}, no extra multi-consensus step needs to be integrated. Our formulation of an asynchronous gossip step is similar to \cite{even2021continuized}, which introduces a stochastic variable on edges;  however, contrary to this work, our gossip and gradient computations are decoupled. We emphasize that while the dynamic \ref{dynamic} is formulated using SDEs~\citep{arnold1974stochastic}, which brings the power of the continuous-time analysis toolbox, it is still \emph{event-based} and thus discrete in nature. Hence, the dynamic can be efficiently implemented in practice as explained in Sec. \ref{algo} and Appendix \ref{PracticalImplem}. 

\subsection{Theoretical guarantees}\label{guarantee}
We prove the following in Appendix \ref{proof-main-thm}.
\begin{theorem}\label{main-thm}
Assume each $f_i$ is $\mu$-strongly convex and $L$-smooth.  Assume \ref{ass1}, \ref{ass2}, and that $2\chi_1\chi_2\leq 1$. Then there exists some parameters for the dynamic Eq. \eqref{dynamic} (given in Appendix \ref{important-lemma}), such that for any initialization $x_0\in \text{ker}(\pi)$, and $\tilde x_0=x_0, y_0=\tilde y_0=\nabla f(x_0)-\frac \mu 2 x_0, z_0=\tilde z_0=-\pi y_0$,  we get for $t\in \mathbb{R}^+$:
\begin{equation*}\mathbb{E}[\Vert x_t-x^*\Vert^2]\leq (\frac 12+\frac {23}8\frac{L}\mu+2\frac {L^2}{\mu^2})\Vert x_0-x^*\Vert^2e^{-\frac{t}{8\sqrt 2}\sqrt{\frac{\mu}L}}\end{equation*}
Also, the expected number of oracle gradient calls is $nt$ and the expected number of edges activated is:
$\frac t 2 \mathrm{Tr}\, \Lambda\,.$\end{theorem}
The condition $2\chi_1\chi_2 \leq 1$ can simply be understood as whether or not the chosen communication strategy suits sufficiently well the graph topology $\mathcal{E} $ for DADAO to converge using the expected rates of communication $(\lambda_{ij})_{(i,j) \in \mathcal E}$ compared to the expected rate of one gradient step per time unit on each node we assumed in Assumption \ref{ass1}. In fact, we make the following interpretation:
\begin{remark}
\label{PhysicalInterpretation}
    Introducing $\lambda \triangleq \sum_{(ij) \in \mathcal E} \lambda_{ij}$ and $\mathcal P_\Lambda \triangleq \frac{2 \Lambda}{\mathrm{Tr}\, \; \Lambda}$, we write $\Lambda$ as the product of these two more interpretable quantities to gain more insight on the condition $2\chi_1[\Lambda] \chi_2[\Lambda] \leq 1$:
\begin{equation}
    \Lambda = \lambda \mathcal P_\Lambda.
\end{equation}
In this setting, $\lambda $ is the expected rate of communication over the \emph{whole graph}, while $\mathcal P_\Lambda$ can be interpreted as the Laplacian of $\mathcal E$ with each edge weighted with its probability of having spiked given a communication happened in the graph. We have:
\begin{equation}
    \chi_1[\Lambda]  = \frac{\chi_1[\mathcal P_\Lambda]}{\lambda} \quad ; \quad \chi_2[\Lambda]  = \frac{\chi_2[\mathcal P_ \Lambda]}{\lambda}. \label{chi_normalized}
\end{equation}
$\mathcal P_\Lambda$ being normalized, we could say that the quantities $\chi_1[\mathcal P_\Lambda], \chi_2[\mathcal P_ \Lambda]$ characterize the graph's connectivity while $\lambda$ is the global rate of communication. Then, using \eqref{chi_normalized}, the condition $2\chi_1[\Lambda] \chi_2[\Lambda] \leq 1$ is equivalent to saying
\begin{equation}
    \sqrt{2 \chi_1[\mathcal P_\Lambda] \chi_2[\mathcal P_ \Lambda]} \leq \lambda,
\end{equation}
meaning that the global communication rate should be larger than some spectral quantity quantifying the graph's connectivity. If structural constraints on the network makes it impossible to verify this condition, as the notion of time is \emph{only} defined through the rate of gradient steps given by Assumption \ref{ass1} ($\lambda$ can be interpreted as \textit{"the expected number of communications in the graph between the expected duration separating two subsequent gradient steps on a given node"}), it only means that the gradient steps are happening too fast compared to the ability of the network to communicate, and the rate of gradient steps should be slow-down by a factor of $\sqrt{2 \chi_1 \chi_2}$.
\end{remark}
Thus, given a graph topology, it is straightforward to obtain a Laplacian verifying $2\chi_1\chi_2 \leq 1$. Indeed, we have the following:
\begin{remark}
\label{laplacian}
    For any graph topology $\mathcal{E}$ and any choice of corresponding graph Laplacian $\mathcal L$ verifying $\chi_1 [\mathcal L]$ bounded, there is a communication strategy $(\lambda_{ij})_{(i,j) \in \mathcal E}$ such that $\Lambda$, the Laplacian of $\mathcal E$ with edges $(i,j) \in \mathcal E$ weighted with their expected communication rate $\lambda_{ij}$, verifies $2\chi_1[\Lambda]\chi_2[\Lambda] \leq 1$. One such example is given by:
    \begin{equation*}\Lambda=\sqrt{2\chi_1[\mathcal{L}]\chi_2[\mathcal{L}]}\mathcal{L}\, .\end{equation*}
\end{remark}
This property allows us to use in DADAO the Laplacians introduced in previous work, to which we can now compare. It leads to the following proposition (proved in Appendix \ref{proof-optimality-dadao}), which shows that DADAO obtains better complexities than concurrent works while starting from the same family of Laplacians:

\begin{proposition}[Comparison with concurrent work]\label{optimality-dadao}\hfill\begin{itemize}
    \item If $\kappa=\mathcal{O}(\frac{|\bar{\mathcal{E}}|}{n})$, then DADAO obtains better communication rate than MSDA \cite{scaman17a}, and requires strictly less communications for the complete graph.
   \item For any fixed Laplacian valid for ADOM+ \cite{kovalev2021lower}, DADAO obtains a better communication rate than ADOM+, and requires strictly less communications for the complete graph.
   \item  If $\kappa =\mathcal{O}(\frac{|\bar{\mathcal{E}}|}{n})$, for a valid Laplacian using the Gossip matrix of OGT \cite{song2021optimal} or AGT \cite{li2021accelerated}, DADAO obtains a better communication rate than both Gradient Tracking methods, and requires strictly less communications for the  complete graph.
   \item DADAO requires fewer gradient computations than the Continuized framework \cite{even2021decentralized}, and has a strictly better computation rate for the cycle graph.
\end{itemize}
\end{proposition}

We highlight that \cite{scaman17a} claimed that their algorithm is optimal because they study the number of computations and \textit{synchronized} gossips on a worst-case graph; our claim is, \emph{by nature} different, as we are interested in the number of edges fired rather than the number of synchronized gossip rounds. Indeed, in an asynchronous framework, there is no notion of \textit{round of} communication, which allows this framework to enjoy the advantageous rates of randomized procedures. Moreover, not only this measure of complexity is standard in asynchronous frameworks (\eg, see \cite{Boyd2006}), but also, if we are aiming for the most frugal procedure, minimizing both the total number of computations and communications is of interest. Tab. \ref{complexity-graph} predicts the behavior of our algorithm on various classes of graphs encoded via a normalized Laplacian. It shows that systematically, our algorithm leads to the best complexities. For example, in the case of a complete graph, one synchronized gossip round requires a total of $\vert \mathcal E \vert = \mathcal O(n^2)$ communications whereas we show that a rate of only $\mathcal O (n)$ communications per time unit suffices for DADAO in this case. We note that the graph class depicted in Tab. \ref{complexity-graph} was used as worst-case examples for proving the optimality of \cite{scaman17a} in a synchronous context.

\begin{table*}[!ht]
    \centering
    \label{complexity-graph}
    \caption{Complexity for various graphs using $\frac 1 {\Vert \mathcal L \Vert}\mathcal L$ with $\mathcal L$ the standard Laplacian with unit edge weights. In this case, $\rho = \chi_1$.  The complexities are reported per time unit so that all algorithms reach $\epsilon$-precision at the same time. We have, respectively, for a star/line or cyclic/complete graph and the $d$-dimensional grid: $\chi_1=\mathcal{O}(n)$, $\chi_2=\mathcal{O}(n)$, $\mathrm{Tr} = \mathcal O (1)$ / $\chi_1=\mathcal{O}(n^2)$, $\chi_2=\mathcal{O}(1)$, $\mathrm{Tr} = \mathcal O (n)$  / $\chi_1=\mathcal{O}(1)$, $\chi_2=\mathcal{O}(1)$, $\mathrm{Tr} = \mathcal O (n)$ / $\chi_1=\mathcal{O}(n^{2/d})$, $\chi_2=\mathcal{O}(1)$, $\mathrm{Tr} = \mathcal O (n)$.}
\resizebox{2\columnwidth}{!}{
\begin{tabular}{l|cccc|cccc}
Method   & \multicolumn{4}{c}{\# edges activated per time unit} & \multicolumn{4}{c}{\# gradients computed per time unit}\\
\hline
Graph& Star & Line & Complete & $d$-grid& Star & Line & Complete & $d$-grid\\
\hline
\citep{kovalev2021lower} ADOM+&$n^{2}$&$n^3$&$n^2$&$n^{1+2/d}$&\cellcolor{green!25}$n$&\cellcolor{green!25}$n$&\cellcolor{green!25}$n$&\cellcolor{green!25}$n$\\
\citep{scaman17a} MSDA& $n^{3/2}$ & \cellcolor{green!25}$n^2$ & $n^2$ & \cellcolor{green!25}$n^{1+1/d}$&\cellcolor{green!25}$n$&\cellcolor{green!25}$n$&\cellcolor{green!25}$n$&\cellcolor{green!25}$n$\\
%\citep{song2021optimal} OGT &$n^{3/2}$&$n^2$&$n^2$&$n^{1+\frac 1d}$&n&n&n&n\\
\citep{even2021continuized} Continuized&\cellcolor{green!25}$n$&\cellcolor{green!25}$n^2$&\cellcolor{green!25}$n$&\cellcolor{green!25}$n^{1+1/d}$&\cellcolor{green!25}$n$&$n^2$&\cellcolor{green!25}$n$&$n^{1+1/d}$\\
Centralized&\cellcolor{green!25}$n$&-&-&-&\cellcolor{green!25}$n$&-&-&-\\
DADAO (ours)&\cellcolor{green!25}$n$&\cellcolor{green!25}$n^2$&\cellcolor{green!25}$n$&\cellcolor{green!25}$n^{1+1/d}$&\cellcolor{green!25}$n$&\cellcolor{green!25}$n$&\cellcolor{green!25}$n$&\cellcolor{green!25}$n$
\end{tabular}}
\end{table*}

\section{Practical implementation}

%\section{Numerical experiments}
\subsection{Algorithm}\label{algo}

To study the trajectories of $X_t\triangleq (x_t,\tilde x_t,\tilde y_t), Y_t\triangleq(y_t,z_t,\tilde z_t)$, we use the following, equivalent to \eqref{dynamic}:
\begin{align}dX_t&=a_1(X_t,Y_t)dt+b_1(X_t)d\mathbf{N}(t) \label{dynami-short} \\
        dY_t &=a_2(X_t,Y_t)dt+\sum_{(i,j)\in \mathcal{E}}b^{ij}_2(Y_t)dM_{ij}(t)\,, \nonumber
\end{align}
where $a_1,a_2,b_1=(b_1^i)_i,(b_2^{ij})_{ij}$ are smooth functions. 
We now describe the algorithm used to implement the dynamics of Eq. \eqref{dynamic}. Let us write $ T^{(i)}_1 < T^{(i)}_2 < ... < T^{(i)}_k <...$  the time of the $k$-th event on the $i$-th node, which is either an edge activation, either a gradient update. We remind that the spiking times of a specific event correspond to random variables with independent exponential increments and can thus be generated at the beginning of our simulation. They can also be generated on the fly and locally to stress the locality and asynchronicity of our algorithm. We write $X_t=(X^{(i)}_t)_i$ and $Y_t=(Y^{(i)}_t)_i$, then on the $i$-th node, at the $k$-th iteration, we integrate on $[T_{k}^{(i)};T_{k+1}^{(i)}]$ the  ODE \begin{align*}
dX_t&=a_1(X_t,Y_t)dt\\
dY_t&=a_2(X_t,Y_t)dt\,
\end{align*} to define the values right before the spike. One can easily find the matrix $\mathcal{A} \in \mathbb{R}^{6 \times 6}$ such that:
\begin{equation}\label{ODE}
\begin{pmatrix}
    X^{(i)}_{T^{(i)-}_{k+1}}\\
    Y^{(i)}_{T^{(i)-}_{k+1}}
\end{pmatrix}
=\exp\left((T^{(i)}_{k+1}-T^{(i)}_{k})\mathcal{A}\right)\begin{pmatrix}
    X^{(i)}_{T^{(i)}_{k}}\\
    Y^{(i)}_{T^{(i)}_{k}}
\end{pmatrix}\,.
\end{equation}
Next, if one has a gradient update, then:
$$X^{(i)}_{T^{(i)}_{k+1}}=X^{(i)}_{T^{(i)-}_{k+1}}+b_1\left(X^{(i)}_{T^{(i)-}_{k+1}}\right)\,.$$
Otherwise, if the edge $(i,j)$ or $(j,i)$ is activated, a communication bridge is created between both nodes $i$ and $j$. In this case,  the local update on $i$ writes:
\begin{align*}
Y^{(i)}_{T^{(i)}_{k+1}}&=Y^{(i)}_{T^{(i)-}_{k+1}}+b_2\left(Y^{(i)}_{T^{(i)-}_{k+1}}, Y^{(j)}_{T^{(i)-}_{k+1}}\right)\,.
\end{align*}

Note that, even if this event takes place along an edge $(i,j)$, we can write it separately for nodes $i$ and  $j$ by making sure they both have the events $T_{k_i}^{(i)}=T_{k_j}^{(j)}$, for some $k_i,k_j\in \mathbb{N}$ corresponding to this communication. As advocated, all those operations are local, and we summarize in the Alg. \ref{Algo} the algorithmic block corresponding to our implementation. See Appendix \ref{PracticalImplem} for more details.% on our implementation.

\begin{algorithm}[ht!]
\caption{This algorithm block describes our implementation on each local machine. The $ODE$ routine is described by Eq. \ref{ODE}.}
\SetAlgoLined
\SetKwBlock{Init}{Initialize}{}
\SetKwProg{Para}{In parallel}{ continuously do:}{}
\SetKwComment{Comment}{// }{}
\label{Algo}

\KwIn{On each machine $i \in \{1,...,n\}$, gradient oracle $\nabla f_i$, parameters $\mu, L, \chi_1, t_{\max}$.}
\Init(on each machine $i \in \{1,...,n\}$:){
Set $X^{(i)}, Y^{(i)}, T^{(i)}$ to $0$ and set $\mathcal A$ via Eq. \eqref{def-A}\;}%$T^{(i)} \gets 0$ \;}
\textbf{Synchronize} the clocks of all machines \;
\Para{on workers $i \in \{1,...,n\}$, \textup{\textbf{while}} $t < t_{\max}$, }{
$t \gets clock()$ \;
\If{there is an event at time $t$}{
$(X^{(i)},Y^{(i)}) \gets ODE(\mathcal{A},t-T^{(i)},X^{(i)},Y^{(i)})$\;
\If{the event is to take a gradient step}{
$X^{(i)} \gets X^{(i)}+b_1(X^{(i)})$ ;
}
\ElseIf{the event is to communicate with $j$}{
$Y^{(i)} \gets Y^{(i)}+b_2(Y^{(i)}, Y^{(j)})$
\tcp*[r]{\footnotesize{Happens at $j$ simultaneously.}}
}
$T^{(i)}\gets t$ \;
}
}
\Return{$(x_{t_{\max}}^{(i)})_{1\leq i\leq n}$.}
\end{algorithm}

\subsection{Numerical results}\label{num}
\begin{figure*}[ht]
     \begin{center}
        \subfigure{
            \includegraphics[width=1.\columnwidth]{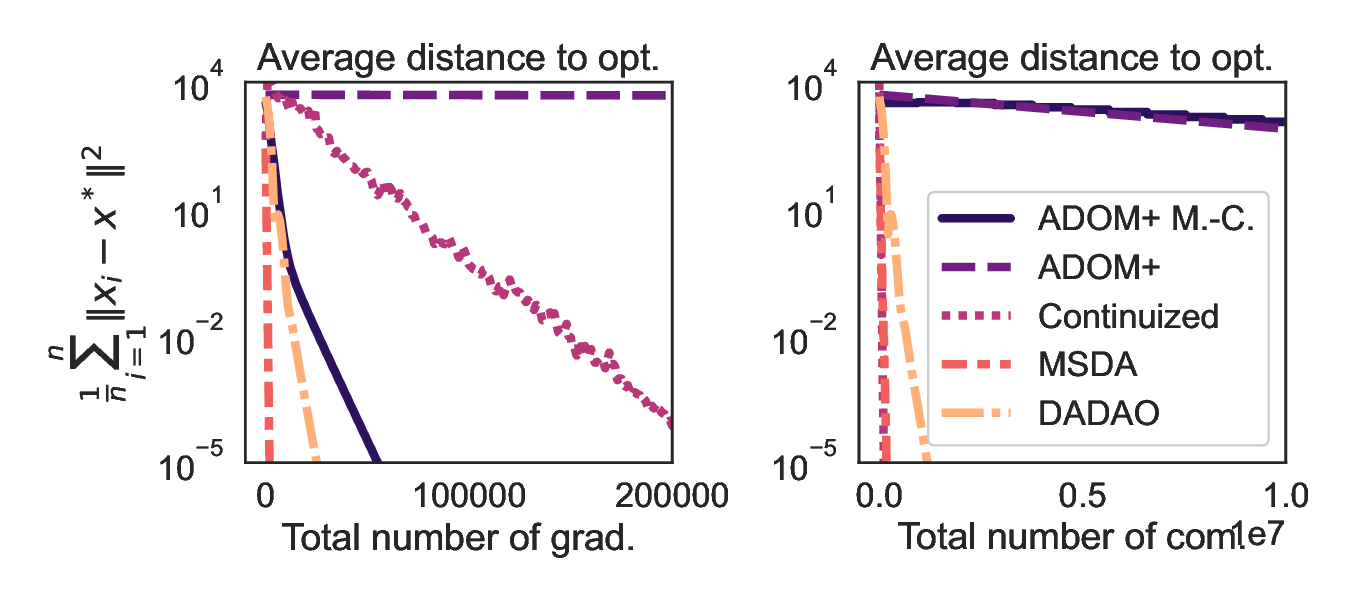}
        }
        \subfigure{
          \includegraphics[width=1.\columnwidth]{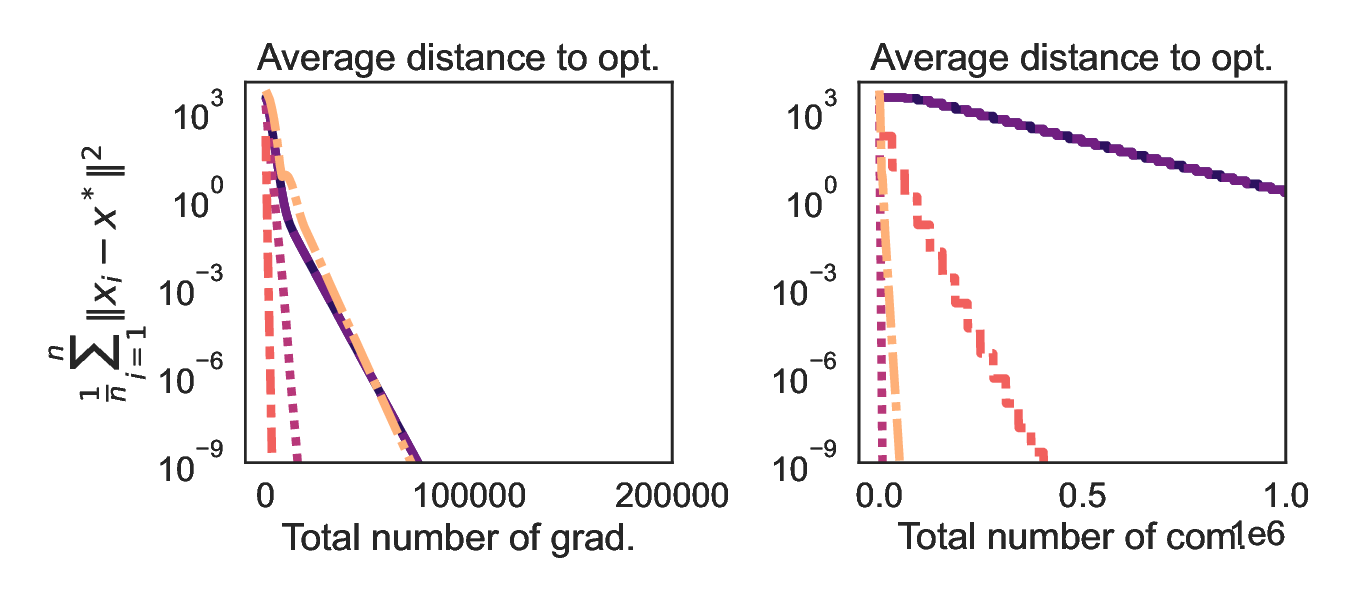}
        }
    \end{center}
    \caption{
    Comparison between ADOM+ \citep{kovalev2021lower}, the continuized framework \citep{even2021continuized}, MSDA \citep{scaman17a} and DADAO, using the same data for the linear regression task, and the same graphs \textit{(from left to right: line with $n=150$, complete with $n=250$)}.}
    \label{ExpeAll}
\end{figure*}
In this section, we study the behavior of our method in a standard experimental setting (\eg, see\cite{kovalev2021lower,even2021continuized}). In order to compare to methods using the gradient of the Fenchel conjugate \cite{even2021continuized} in our experiments, we restrict ourselves to a situation where it is easily computable. Thus, we perform the empirical risk minimization for the decentralized linear regression task given by:
\begin{equation}
\begin{aligned}
    f_i(x)&=\frac 1 m \sum_{j=1}^m \Vert a_{ij}^\top x - c_{ij} \Vert^2,
\end{aligned}
\end{equation}
where $a_{ij} \in \mathbb{R}^d$, and $c_{ij} \in \mathbb{R}$ correspond to $m$ local data points stored at node $i$. We follow a  protocol similar to \cite{kovalev2021lower}: we generate $n$ independent synthetic datasets with the \texttt{make\textunderscore regression} functions of scikit-learn \citep{scikit-learn}, each worker storing $m=100$ data points. We recall that the metrics of interest are the total number of local gradient steps and the total number of individual messages exchanged \textit{(\ie, number of edges that fired)} to reach an $\epsilon$-precision. We systematically used the proposed hyper-parameters of each reference paper for our implementation without any specific fine-tuning.

\paragraph{Comparison between all methods.}We fix the Laplacian matrix via Eq. \eqref{laplacian} to compare simultaneously to the continuized framework \citep{even2021continuized}, MSDA \citep{scaman17a} and ADOM+ \cite{kovalev2021lower}. We compare ourselves to both versions of ADOM+: with and without the Multi-Consensus (M.-C.). We report in Fig. \ref{ExpeAll} results corresponding to the complete graph with $n=250$ nodes and the line graph of size $n=150$. While sharing the same asymptotic rate (see Fig. \ref{star_msda_dadao} for experimental confirmation), we note that the Continuized framework~\citep{even2021continuized} and MSDA~\citep{scaman17a} have better absolute constants than DADAO, giving them an advantage both in terms of the number of communication and gradient steps. However, in the continuized framework, the gradient and communication steps being coupled, the number of gradient computations can potentially be orders of magnitude worse than our algorithm, which is reflected by Fig. \ref{ExpeAll} for the line graph. As for MSDA, Tab. \ref{complexity-graph} showed they do not have the best communication rates on certain classes of graphs, as  confirmed to the right in Fig. \ref{ExpeAll} for MSDA and in Fig. \ref{star_msda_dadao}. Thanks to its M.-C. procedure, ADOM+ can significantly reduce the number of necessary gradient steps. Yet, consistently with our analysis in Prop. \ref{optimality-dadao}, our method is systematically better in all settings in terms of communications.

\paragraph{Further comparison between DADAO and MSDA.} To experimentally confirm that our communication complexity is better than accelerated methods using the spectral gap and abstract away the better absolute constants for MSDA, we ran DADAO and MSDA on the task of distributed linear regression for star graphs of size $n \in \{10,20, 70, 200, 300, 1000, 2000\}$. We considered the evolution of the average distance to the optimal with the number of gradient steps and commmunication steps in log scale for each run, and computed the slope of each line. For each graph size, we report in Fig. \ref{star_msda_dadao} the rate between the slope for DADAO and the slope for MSDA.
\begin{figure}[ht]
     \begin{center}
    \includegraphics[width=1.0\columnwidth]{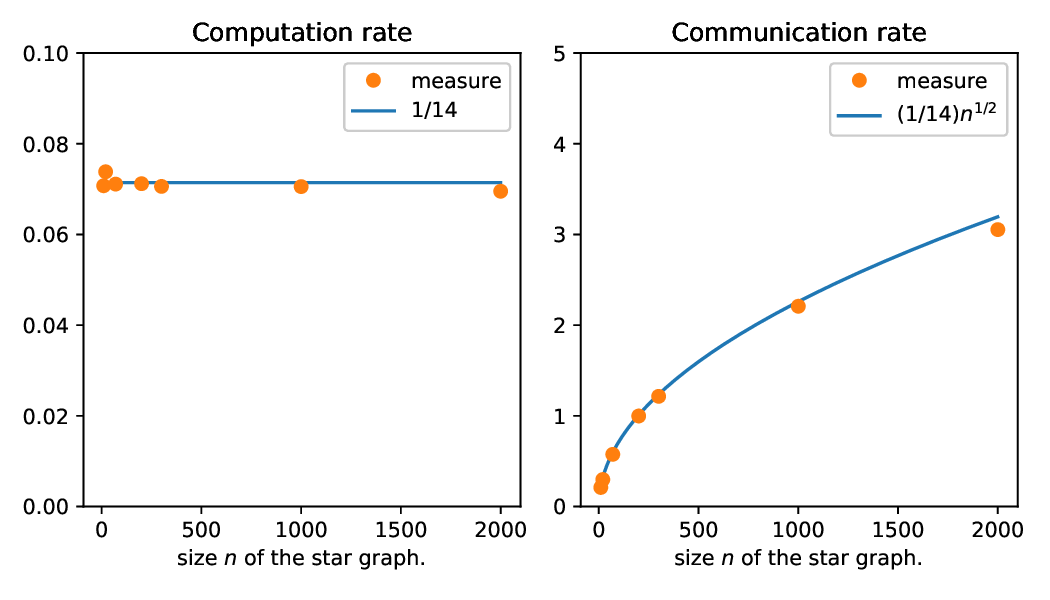}
    \end{center}
    \caption{
    Rate between the slopes in $\log$ scale of DADAO and MSDA for star graphs of size $n \in \{10,20, 70, 200, 300, 1000, 2000\}$.}
    \label{star_msda_dadao}
\end{figure}
We remark that the rate between the gradient complexities of DADAO and MSDA is indeed a $\mathcal O (1)$ (with a constant value of $\simeq 1/14$) while MSDA is indeed $\mathcal O (\sqrt{n} )$ worse than DADAO for communications on the star graph, as stated in Tab. \ref{complexity-graph}.

\paragraph{Discussion.} We do not claim the optimality of the absolute constant in DADAO's rate (the $8 \sqrt{2}$ term in the exponential of Th. \ref{main-thm}), which might be further optimized with a tighter analysis and different values of parameters. However, our focus being on large scale problems, we are concerned about asymptotic rates rather than the effect of the absolute constants. Thus, even-though  Fig. \ref{ExpeAll} may sometimes display a faster convergence for the continuized framework \citep{even2021continuized} and MSDA \citep{scaman17a} in terms of \textit{number of steps},  we remind that both \citep{even2021continuized, scaman17a} use expensive evaluation of dual gradients whereas we only use primal gradients, and Tab. \ref{complexity-graph} confirms that DADAO has at least their asymptotic rate. On the star graph, where Tab. \ref{complexity-graph} showed a strict advantage for DADAO compared to MSDA, Fig. \ref{star_msda_dadao} confirms that our convergence rate meets the one of \citep{scaman17a} for $n \simeq 200$, and has strictly better rates for larger graphs.
Finally, we note that, our algorithm falling into the \textit{randomized} category, further improvements may be theoretically possible. Indeed, \cite{Woodworth2016complexity} showed that the lower bound on the number of grad-and-prox oracle accesses for the optimization of composite objectives with deterministic methods is in $\mathcal{O}(n \sqrt{\frac L \mu} \log(\frac 1 \epsilon))$, which is attained by work such as \citep{scaman17a}. However, for randomized algorithms, the lower bound is in $ \mathcal{O}(n + \sqrt{\frac {nL} \mu} \log(\frac 1 \epsilon))$ and, to the best of our knowledge, no algorithm meeting this bound is known to date in the decentralized setting, and in particular, in the asynchronous one.

In conclusion, while several methods can share similar asymptotic convergence rates, ours is the only one to perform at least as well as its competitors in every setting for different graph's topology, as predicted by Tab. \ref{tab:other_methods}. %See Appendix \ref{} for an exhaustive display of the results of all our experiments and more details.

\section{Conclusion}
In this work, we have proposed a novel stochastic algorithm for the decentralized optimization of a sum of smooth and strongly convex functions. We have demonstrated, theoretically and empirically, that this algorithm leads systematically to a substantial acceleration compared to state-of-the-art works. Furthermore, our algorithm is asynchronous, decoupled, primal, and does not rely on an extra inner loop: each of these properties makes it suitable for real applications. In future work, we would like to explore the robustness of such algorithms to more challenging variabilities occurring in real-life applications such as time-varying networks and to extend our work to less regular functions and stochastic analysis.

\section*{Acknowledgements}
This work was supported by Project ANR-21-CE23-0030 ADONIS and EMERG-ADONIS from Alliance SU. This work was granted access to the HPC/AI resources of IDRIS under the allocation AD011013743 made by GENCI. In addition, the authors would like to thank Raphaël Berthier, Mathieu Even, Hadrien Hendrikx, and Dmitry Kovalev for their helpful discussions.
%\newpage
\bibliography{ref}
\bibliographystyle{icml2023}

%%%%%%%%%%%%%%%%%%%%%%%%%%%%%%%%%%%%%%%%%%%%%%%%%%%%%%%%%%%%%%%%%%%%%%%%%%%%%%%
%%%%%%%%%%%%%%%%%%%%%%%%%%%%%%%%%%%%%%%%%%%%%%%%%%%%%%%%%%%%%%%%%%%%%%%%%%%%%%%
% APPENDIX
%%%%%%%%%%%%%%%%%%%%%%%%%%%%%%%%%%%%%%%%%%%%%%%%%%%%%%%%%%%%%%%%%%%%%%%%%%%%%%%
%%%%%%%%%%%%%%%%%%%%%%%%%%%%%%%%%%%%%%%%%%%%%%%%%%%%%%%%%%%%%%%%%%%%%%%%%%%%%%%
\newpage
\appendix
\onecolumn
%\newgeometry{total={5.5in, 9in}}

\addcontentsline{toc}{section}{Appendix} % Add the appendix text to the document TOC
\part{Appendix} % Start the appendix part
\parttoc % Insert the appendix TOC

%\startcontents[appendix]
%\renewcommand\contentsname{Table of Contents of the Appendix}
%\printcontents[appendix]{ }{1}{\section*{\contentsname}}

\section{Communication bounds}
The following properties will be used all along the proofs of the Lemmas and Theorems and are related to the communication of our nodes.
\begin{lemma}\label{stable-span}
Under the assumptions of Theorem \ref{main-thm}, if $z_0,\tilde z_0\in \text{span}(\pi)$, then $z_t,\tilde z_t\in \text{span}(\pi)$ almost surely.
\end{lemma}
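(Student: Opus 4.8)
The plan is to exploit the fact that $\mathrm{span}(\pi)$ is a linear subspace which is invariant under the entire dynamic governing the pair $(z_t,\tilde z_t)$, and that the stochastic (jump) part of that dynamic never leaves it. Write $P\triangleq\mathbf{I}-\pi=\frac1n\mathbf{1}\mathbf{1}^\mathsf{T}$ for the complementary orthogonal projector; since $\pi$ is an orthogonal projector, $\mathrm{span}(\pi)=\ker P$, so a vector $v$ lies in $\mathrm{span}(\pi)$ exactly when $Pv=0$. Thus the claim is equivalent to: $Pz_t=P\tilde z_t=0$ for all $t\in\mathbb{R}^+$ almost surely.

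The crucial elementary observation is that for every edge $(i,j)\in\mathcal{E}(t)$ one has $P(\mathbf{e}_i-\mathbf{e}_j)=0$, because $\mathbf{e}_i-\mathbf{e}_j=(e_i-e_j)\otimes\mathbf{I}$ and $\mathbf{1}^\mathsf{T}(e_i-e_j)=0$; hence $P(\mathbf{e}_i-\mathbf{e}_j)(\mathbf{e}_i-\mathbf{e}_j)^\mathsf{T}=0$. Consequently, applying the constant linear map $P$ to the last two lines of \eqref{dynamic} annihilates every $dM_{ij}(t)$-term. Setting $u_t\triangleq Pz_t$ and $\tilde u_t\triangleq P\tilde z_t$, this leaves the purely deterministic coupled linear ODE
\begin{align*}
du_t &= \alpha(\tilde u_t-u_t)\,dt, & d\tilde u_t &= \tilde\alpha(u_t-\tilde u_t)\,dt.
\end{align*}
In particular $u_t$ and $\tilde u_t$ have no jumps at the event times, so this identity holds on all of $\mathbb{R}^+$ and not merely on the intervals where $\Lambda(t)$ is constant (Assumption \ref{ass2}).

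It then remains to invoke uniqueness: the right-hand side above is globally Lipschitz in $(u_t,\tilde u_t)$, and the initial condition is $u_0=Pz_0=0$, $\tilde u_0=P\tilde z_0=0$ by the hypothesis $z_0,\tilde z_0\in\mathrm{span}(\pi)$. By Cauchy–Lipschitz, $u_t\equiv\tilde u_t\equiv 0$, i.e.\ $z_t,\tilde z_t\in\ker P=\mathrm{span}(\pi)$ for all $t$, almost surely. I do not expect a real obstacle here; the only point requiring a little care is the bookkeeping that the jump contributions to $u_t$ and $\tilde u_t$ are exactly zero (not merely zero in expectation), so that $u_t,\tilde u_t$ are genuinely continuous paths solving the above ODE globally — which is precisely what licenses the pathwise (almost sure) uniqueness conclusion. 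Note also that the hypothesis on $y_t$ plays no role, since the $y_t+z_t$ factor multiplying the jump terms is irrelevant once $P$ kills $(\mathbf{e}_i-\mathbf{e}_j)$.
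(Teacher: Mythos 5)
Your proof is correct and rests on the same single algebraic fact as the paper's, namely that the edge operators $(\mathbf{e}_i-\mathbf{e}_j)(\mathbf{e}_i-\mathbf{e}_j)^\mathsf{T}$ map into $\mathrm{span}(\pi)$ — the paper states this as $\pi(\mathbf{e}_i-\mathbf{e}_j)(\mathbf{e}_i-\mathbf{e}_j)^\mathsf{T}=(\mathbf{e}_i-\mathbf{e}_j)(\mathbf{e}_i-\mathbf{e}_j)^\mathsf{T}$, you state the equivalent $P(\mathbf{e}_i-\mathbf{e}_j)(\mathbf{e}_i-\mathbf{e}_j)^\mathsf{T}=0$. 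The packaging differs slightly, and yours is the cleaner of the two: the paper simply asserts that "the variations of $(z_t,\tilde z_t)$ belong to $\mathrm{span}(\pi)$, and thus so is the trajectory," which for the drift terms $\alpha(\tilde z_t-z_t)$ and $\tilde\alpha(z_t-\tilde z_t)$ implicitly presupposes that $z_t,\tilde z_t$ are already in $\mathrm{span}(\pi)$ — i.e.\ it invokes the subspace invariance without spelling out the inductive or continuity argument. By projecting onto the complement, you sidestep this: the projected pair $(Pz_t,P\tilde z_t)$ satisfies a closed, deterministic, globally Lipschitz linear ODE with zero initial data, and Cauchy–Lipschitz uniqueness gives the conclusion pathwise with no bootstrapping. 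Your remark that the jump contributions vanish exactly (not merely in expectation) is also the right thing to flag, since it is what makes the projected paths genuinely continuous and the uniqueness argument applicable almost surely rather than only in mean.
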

\begin{proof}
It's clear that for any $i,j$, we get:
$$\pi (\mathbf{e}_i-\mathbf{e}_j)(\mathbf{e}_i-\mathbf{e}_j)^\mathsf{T}=(\mathbf{e}_i-\mathbf{e}_j)(\mathbf{e}_i-\mathbf{e}_j)^\mathsf{T}\,.$$
Thus, the variations of $(z_t,\tilde z_t)$ belong to $\text{span}(\pi)$, and so is the trajectory.
\end{proof}

%\begin{proposition}
%For any  $a, b, c \;\in (\mathbb{R}^d)^\mathcal{V}$, we have:
%\begin{equation*}
%    -2 \langle a, b\rangle = - \left(\Vert a\Vert^2 + \Vert b \Vert^2 - \Vert a-b \Vert^2 \right)
%    \label{prop1}
%\end{equation*}
%\end{proposition}
\begin{lemma}[Effective resistance contraction]\label{resistance}For $(i,j) \in \mathcal E$ and any $x\in \mathbb{R}^{n\times d}$, we have:
$$\Vert (\mathbf{e}_i-\mathbf{e}_j)(\mathbf{e}_i-\mathbf{e}_j)^\mathsf{T}x\Vert_{\mathbf{\Lambda}^+}^2\leq  \chi_2 \Vert(\mathbf{e}_i-\mathbf{e}_j)(\mathbf{e}_i-\mathbf{e}_j)^\mathsf{T}x\Vert^2 \, .$$
\end{lemma}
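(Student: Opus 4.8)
The plan is to collapse the left-hand side to a scalar \emph{effective resistance} by unfolding the Kronecker structure of $\mathbf{\Lambda}(t)$, and then bound that scalar directly from the definition of $\chi_2(t)$ together with Assumption~\ref{ass3}. Throughout I take $(i,j)\in\mathcal{E}(t)$, which is the only case needed downstream (the jump term of $\tilde z_t$ against $\Vert\tilde z-z^*\Vert_{\mathbf{\Lambda}(t)^+}^2$); for a non-edge pair the effective resistance need not be controlled by $\chi_2^*$.

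First I would record the identities coming from $\mathbf{e}_k=e_k\otimes\mathbf{I}$: we have $(\mathbf{e}_i-\mathbf{e}_j)(\mathbf{e}_i-\mathbf{e}_j)^\mathsf{T}=\big((e_i-e_j)(e_i-e_j)^\mathsf{T}\big)\otimes\mathbf{I}$, hence $\mathbf{\Lambda}(t)=\Lambda(t)\otimes\mathbf{I}$ and $\mathbf{\Lambda}(t)^+=\Lambda(t)^+\otimes\mathbf{I}$. Writing $x=(x_1,\dots,x_n)$ in blocks $x_k\in\mathbb{R}^d$, one has $(\mathbf{e}_i-\mathbf{e}_j)^\mathsf{T}x=x_i-x_j$, so the vector in the statement is $v\triangleq(\mathbf{e}_i-\mathbf{e}_j)(\mathbf{e}_i-\mathbf{e}_j)^\mathsf{T}x=(e_i-e_j)\otimes(x_i-x_j)$. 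The mixed-product rule for Kronecker products then gives, purely algebraically,
\[
\Vert v\Vert_{\mathbf{\Lambda}(t)^+}^2=v^\mathsf{T}\big(\Lambda(t)^+\otimes\mathbf{I}\big)v=\big[(e_i-e_j)^\mathsf{T}\Lambda(t)^+(e_i-e_j)\big]\,\Vert x_i-x_j\Vert^2 .
\]

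Second, under the hypotheses of Theorem~\ref{main-thm}, Lemma~\ref{chi-lemma} and Assumption~\ref{ass3} ensure $\bar{\mathcal{E}}(t)$ is connected, so $\ker\Lambda(t)=\text{span}(\mathbf{1})$ and the bracketed scalar is the genuine effective resistance of the edge $(i,j)$. By the definition $\chi_2(t)=\tfrac{1}{2}\sup_{(i,j)\in\mathcal{E}(t)}(e_i-e_j)^\mathsf{T}\Lambda(t)^+(e_i-e_j)$ and Assumption~\ref{ass3}, this scalar is at most $2\chi_2(t)\le 2\chi_2^*$; meanwhile $\Vert v\Vert^2=\Vert e_i-e_j\Vert^2\Vert x_i-x_j\Vert^2=2\Vert x_i-x_j\Vert^2$. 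Combining these,
\[
\Vert v\Vert_{\mathbf{\Lambda}(t)^+}^2\le 2\chi_2^*\,\Vert x_i-x_j\Vert^2=\chi_2^*\,\Vert v\Vert^2 ,
\]
which is the asserted bound.

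I do not expect a genuine obstacle here; the claim is bookkeeping around the definition of effective resistance, and the only steps requiring attention are: (i) the implicit restriction to edges $(i,j)\in\mathcal{E}(t)$, without which the $\chi_2^*$ bound can fail; (ii) keeping track of the factor $\Vert e_i-e_j\Vert^2=2$, which is precisely why the constant on the right is $\chi_2^*$ rather than $2\chi_2^*$; and (iii) the notation — in the paper's convention $\Vert\cdot\Vert_A^2$ denotes the quadratic form $(\cdot)^\mathsf{T}A(\cdot)$, so the right-hand side of the lemma is really $\chi_2^*\Vert v\Vert^2$.
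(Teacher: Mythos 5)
Your proof is correct and follows the same argument as the paper's: expand the $\Vert\cdot\Vert_{\mathbf{\Lambda}(t)^+}^2$ quadratic form, reduce to the scalar effective resistance $(e_i-e_j)^\mathsf{T}\Lambda(t)^+(e_i-e_j)$ via the Kronecker structure, and bound it by $2\chi_2^*$ from the definition of $\chi_2$ and Assumption~\ref{ass3}, the factor of $2$ being absorbed because $\Vert e_i-e_j\Vert^2=2$. Your side remarks are also well taken: the bound is only used, and only holds in general, for $(i,j)\in\mathcal{E}(t)$, and the right-hand side of the displayed statement is missing an exponent $2$, as the last line of the paper's own proof confirms.
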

\begin{proof}
Indeed, we note that:
\begin{align}
    \Vert (\mathbf{e}_i-\mathbf{e}_j)(\mathbf{e}_i-\mathbf{e}_j)^\mathsf{T}x\Vert_{\mathbf{\Lambda}^+}^2&= \langle (\mathbf{e}_i-\mathbf{e}_j)^\mathsf{T} x, (\mathbf{e}_i-\mathbf{e}_j)^\mathsf{T}\mathbf{\Lambda}^+(\mathbf{e}_i-\mathbf{e}_j)(\mathbf{e}_i-\mathbf{e}_j)^\mathsf{T}x \rangle \\
    &\leq 2  \chi_2 \langle (\mathbf{e}_i-\mathbf{e}_j)^\mathsf{T} x, (\mathbf{e}_i-\mathbf{e}_j)^\mathsf{T}x \rangle \\
    &=  \chi_2\Vert (\mathbf{e}_i-\mathbf{e}_j)(\mathbf{e}_i-\mathbf{e}_j)^\mathsf{T}x\Vert^2
\end{align}
\end{proof}

\begin{lemma}[Bound on the resistance]\label{resistance-up}\hfill

    For $(i,j)\in \mathcal{E}$, $(e_i-e_j)^\mathsf{T}\Lambda^+(e_i-e_j)\leq \frac{1}{\lambda_{ij}+\lambda_{ji}}$.
\end{lemma}

\begin{proof}
For a graph such that $\vert \bar{\mathcal E} \vert = 1$, the inequality is trivial. Now, we assume that there are other edges than $(i,j)$ or $(j,i)$. Thus, we have:

$$\Lambda \succcurlyeq (\lambda_{ij}+\lambda_{ji}) (e_i-e_j)(e_i-e_j)^\mathsf{T}+\tilde \lambda \tilde \pi$$
where $\tilde \pi (e_i-e_j)=0,\tilde \pi \mathbf{1}=0$, $\text{rank}(\tilde \pi) = n-1$, $\tilde \pi$ orthogonal projector and $\tilde \lambda>0$ small enough. In this case: $ \left((\lambda_{ij}+\lambda_{ji}) (e_i-e_j)(e_i-e_j)^\mathsf{T}+\tilde \lambda \tilde \pi \right)^+ \succcurlyeq \Lambda^+$,
but this implies that
$\frac 1 {\lambda_{ij}+\lambda_{ji}}\geq (e_i-e_j)^\mathsf{T}\Lambda^+(e_i-e_j)$.
\end{proof}

\begin{proof}[Proof of Lemma \ref{chi-lemma}] \label{proof-chi-lemma} First, we note that $\Lambda $ is symmetric and has a non-negative spectrum, as:
$$x^\mathsf{T} \Lambda  x = \sum_{(ij) \in \mathcal{E}} \lambda_{ij} \Vert x_i - x_j \Vert^2 \, .$$
From this, we also clearly see that $\chi_1 = + \infty $ iff the graph is disconnected. Next, assuming that the graph is connected, $0$ is an eigenvalue of $\Lambda$ with multiplicity $1$ and by definition of $\chi_1 $, we have $\chi_1\geq \chi_2$. As we also have the following:
$$
\sum_{(i,j) \in \mathcal E } \lambda_{ij}(e_i-e_j)^\mathsf{T}\Lambda^+(e_i-e_j) = \mathrm{Tr}\,( \Lambda^+ \Lambda ) = n-1 \, ,
$$
we can write:
$$
n-1 \leq 2\chi_2 \sum_{(i,j) \in \mathcal E } \lambda_{ij} = \chi_2\mathrm{Tr}\,\Lambda
$$
and get $\frac {n-1} {\mathrm{Tr}\,\Lambda } \leq\chi_2$.
Finally, note that: $\mathrm{Tr}\,(\Lambda) = 2\sum_{(i,j) \in \mathcal{E}} \lambda_{ij} \leq 2 \vert \bar{\mathcal{E}}\vert \sup_{(i,j)\in\mathcal{E}}( \lambda_{ij} + \lambda_{ji} )$ and $\mathrm{Tr}\,(\Lambda) \leq (n-1)\Vert \Lambda\Vert$, so that, using Lemma \ref{resistance-up}:

\begin{align}
\sqrt{\chi_2}\mathrm{Tr }(\Lambda)&\leq \frac{1}{\sqrt{2\inf_{(i,j)\in\mathcal{E}}(\lambda_{ij} + \lambda_{ji} )}} \sqrt{\mathrm{Tr}\, \Lambda} \sqrt{\mathrm{Tr}\, \Lambda}\\
&\leq \sqrt{\frac{\mathrm{Tr}\, \Lambda}{2\inf_{(i,j)\in\mathcal{E}}(\lambda_{ij} + \lambda_{ji} )}} \sqrt{2 \vert \bar{\mathcal{E}}\vert \sup_{(i,j)\in\mathcal{E}} (\lambda_{ij}  + \lambda_{ji} )}\\
& \leq  \sqrt{\kappa} \sqrt{\vert \bar{\mathcal{E}}\vert } \sqrt{(n-1)\Vert \Lambda\Vert} %\\
%& \leq \sqrt{\kappa\Vert \Lambda\Vert}\vert \bar{\mathcal{E}}\vert
\end{align}

\end{proof}

\section{Saddle Point Reformulation}
\label{saddle-point-reform}
With $0 <\nu<\mu$ and introducing an extra dual variable $\hat x$, we get:
\begin{align*}
\inf_{x\in \mathbb{R}^d} \sum_{i=1}^n f_i(x) &=\inf_{\substack{ x, \hat x \in \mathbb{R}^{n \times d} \\x=\hat x, \pi \hat x=0}} \sum_{i=1}^n f_i(x_i)-\frac \nu 2 \Vert x\Vert^2 +\frac \nu 2 \Vert \hat x\Vert^2\\
&=\inf_{x,\hat x \in \mathbb{R}^{n \times d}}\sup_{y,z  \in \mathbb{R}^{n \times d}}\sum_{i=1}^n f_i(x_i)-\frac \nu 2 \Vert x\Vert^2 +\frac \nu 2 \Vert \hat  x\Vert^2+ \langle y,\hat x-x \rangle +\langle z,\pi \hat x \rangle\\
&=\inf_{x \in \mathbb{R}^{n \times d}}\sup_{y,z  \in \mathbb{R}^{n \times d}} \inf_{\hat x \in \mathbb{R}^{n \times d}}\sum_{i=1}^n f_i(x_i)-\frac \nu 2 \Vert x\Vert^2 +\frac \nu 2 \Vert \hat  x\Vert^2+ \langle y,\hat x-x \rangle +\langle z,\pi \hat x \rangle\\
&=\inf_{x \in \mathbb{R}^{n \times d}}\sup_{y,z  \in \mathbb{R}^{n \times d}} \sum_{i=1}^nf_i(x_i)-\frac \nu 2\Vert x\Vert^2- \langle x,y \rangle-\frac 1{2\nu}\Vert \pi z+ y\Vert^2\,.
\end{align*}
\section{Proof of the main Theorem (Th. 3.5) }
\label{proof-main-thm}

\paragraph{Basic reminds about the Bregman divergence}
 For a smooth convex function $F$, $$d_F(x,y)\triangleq F(x)-F(y)-\langle \nabla F(y),x-y\rangle$$ is its Bregman divergence. Next, we  set $\nu=\frac \mu 2$ such that, for $F(x)=\sum_{i=1}^nf_i(x_i)-\frac \nu2 \Vert x\Vert^2$, then $F$ is $L$-smooth, $\nu$-strongly convex, and we get:
$$\frac{1}{2L}\Vert \nabla F(x)-\nabla F(y)\Vert^2\leq d_F(x,y)\leq\frac L2 \Vert x-y\Vert^2\,,$$
and
$$
\frac{\nu}{2}\Vert x-y\Vert^2\leq d_F(x,y)\leq\frac 1{2\nu} \Vert \nabla F(x)-\nabla F(y)\Vert^2\,,
$$

\begin{proof}[Proof of Theorem \ref{main-thm}]
We introduce for a positive semi-definite matrix $A$, $\Vert x\Vert_A^2\triangleq \langle x,Ax \rangle$. For our proof, we rely on the notation of Eq. \refeq{dynami-short}, and we introduce $X\triangleq(x,\tilde x,\tilde y),Y\triangleq(y,z,\tilde z)$ and the following Lyapunov potential:
\begin{align*}
\Phi(t,X,Y)&\triangleq A_t d_F(x,x^*) + \tilde A_t\Vert \tilde x-x^*\Vert^2 +B_t \Vert y-y^*\Vert^2+\tilde B_t\Vert \tilde y-y^*\Vert^2\\
&+C_t\Vert z+y-z^*-y^*\Vert^2+\tilde C_t\Vert \tilde z-z^*\Vert_{\mathbf{\Lambda}^+}^2\,,
\end{align*}
where $A_t,\tilde A_t,B_t,\tilde B_t,C_t,\tilde C_t$ are non-negative functions to be defined. We will use this potential to control the trajectories.

Because $\Phi$ is smooth, the SDE is a smooth trajectory,  we get via Ito's formula~\citep{last2017lectures} applied to the semi-martingale $(X_t,Y_t)$ on any intervals $[0,T]$:
\begin{align*}
\Phi(T,X_{T},Y_{T})&=\Phi(0,X_{0},Y_{0})+\int_{0}^{T}\langle \nabla \Phi(t,X_t,Y_t),\begin{pmatrix}1\\a_1(X_t,Y_t)\\a_2(X_t,Y_t)\end{pmatrix}\rangle dt\\
&+\sum_{i=1}^n\int_{0}^{T}\big(\Phi(t,X_t+b^i_1(X_t),Y_t)-\Phi(t,X_t,Y_t)\big)dt\\
&+\sum_{(i,j)\in\mathcal{E} }\int_{0}^{T}\big(\Phi(t,X_t,Y_t+b^{ij}_2(Y_t))-\Phi(t,X_t,Y_t)\big)\lambda_{ij}dt\\ &+\Theta_{T}\,,
\end{align*}
where the following quantity is a Martingale:
\begin{align*}
\Theta_u&\triangleq\sum_{i=1}^n\int_0^u\big(\Phi(t,X_{t^-},Y_{t^-}+b^i_1(X_{t^-}))-\Phi(u,X_{t^-},Y_{t^-})\big)(dN_i(t)-dt)\\
&+\sum_{(i,j)\in\mathcal{E} }\int_0^u\big(\Phi(t,X_{t^-}+b^{ij}_2(X_{t^-}),Y_{t^-})-\Phi(t,X_{t^-},Y_{t^-})\big)(dM_{ij}(t)-\lambda_{ij}dt)\,.\end{align*}
Taking the expectation, we get that, as the initialization is deterministic:

\begin{align*}
\mathbb{E}[\Phi(T,X_{T},Y_{T})]&=\Phi(0,X_{0},Y_{0})+\int_{0}^{T}\langle \nabla \Phi(t,X_t,Y_t),\begin{pmatrix}1\\a_1(X_t,Y_t)\\a_2(X_t,Y_t)\end{pmatrix}\rangle dt\\
&+\sum_{i=1}^n\int_{0}^{T}\big(\Phi(t,X_t+b^i_1(X_t),Y_t)-\Phi(t,X_t,Y_t)\big)dt\\
&+\sum_{(i,j)\in\mathcal{E} }\int_{0}^{T}\big(\Phi(t,X_t,Y_t+b^{ij}_2(Y_t))-\Phi(t,X_t,Y_t)\big)\lambda_{ij}dt\,,
\end{align*}

 To show that the integrand term is negative, we will use the following technical Lemma, which is also difficult to prove and whose proof is deferred to Appendix \ref{proof-important-lemma}:
\begin{lemma}\label{important-lemma}
If:
$$\begin{array}{llllll}
        \eta  = \frac 18 \sqrt{\frac \nu L}
         &  \; \gamma = \frac{1}{4L}
         &  \;\delta = \frac 14 \sqrt{\frac \nu L}
         & \;\alpha = \frac 14 \sqrt{\frac \nu L}
         & \;\beta = \frac 12
         &  \;\theta = \frac 12 \sqrt{\frac L \nu }
         \\
         \tilde \eta = \frac 18 \sqrt{\frac \nu L}
         & \; \tilde \gamma = \frac 1{4 \sqrt{\nu L}}
         & \; \tilde \delta = 1
         & \;\tilde \alpha = \frac 18 \sqrt{\frac \nu L}
         & \;\tilde \beta = 2 \chi_1[\Lambda] \sqrt{\frac L \nu }
         & \; \nu = \frac \mu 2
    \end{array}$$

and

$$\begin{array}{lllllll}
A_t=e^{\frac{t}{8\sqrt 2}\sqrt{\frac{\mu}L}} & \; \tilde A_t=\frac{\mu}8e^{\frac{t}{8\sqrt 2}\sqrt{\frac{\mu}L}} & \; \tilde B_t=\frac{1}{8L}e^{\frac{t}{8\sqrt 2}\sqrt{\frac{\mu}L}} \\
B_t=\frac{1}{16L}e^{\frac{t}{8\sqrt 2}\sqrt{\frac{\mu}L}}  & \; C_t=\frac{1}{2\mu}e^{\frac{t}{8\sqrt 2}\sqrt{\frac{\mu}L}}  & \; \tilde C_t=\frac{1}{32\chi_1 L}e^{\frac{t}{8\sqrt 2}\sqrt{\frac{\mu}L}}\,.
\end{array}$$

then:

\begin{align*}
\langle \nabla &\Phi(t,X_t,Y_t),\begin{pmatrix}1\\a_1(X_t,Y_t)\\a_2(X_t,Y_t)\end{pmatrix}\rangle 
+\big(\Phi(t,X_t+b_1(X_t),Y_t)-\Phi(t,X_t,Y_t)\big)\\
&+\sum_{(i,j)\in \mathcal{E} }\lambda_{ij} \big(\Phi(t,X_t,Y_t+b^{ij}_2(Y_t))-\Phi(t,X_t,Y_t)\big)\leq 0\text{ a.s. .}
\end{align*}

\end{lemma}

Now, we remark that if we have $F(x) = f(x)-\frac \mu 4 \Vert x \Vert^2$, and initialize with $\tilde x_0=x_0$, $y_0=\tilde y_0=\nabla F(x_0)$ and $z_0=\tilde z_0=-\pi \nabla F(x_0)$, then, given the linear relation between $A_t,\tilde A_t,B_t,\tilde B_t,C_t,\tilde C_t$, the $L$ smoothness and the fact $\pi$ is an orthogonal projection, we get:
\begin{align*}
    \Phi(0,X_0,Y_0)&\leq d_F(x_0,x^*)+\frac{\mu}8 \Vert x_0-x^*\Vert^2+\frac{1}{16L} \Vert \nabla F(x_0)-\nabla F(x^*)\Vert^2\\
    &+\frac{1}{8L} \Vert \nabla F(x_0)-\nabla F(x^*)\Vert^2
    +\frac 1{2\mu}\Vert (\mathbf{I}-\pi)(\nabla F(x_0)-\nabla F(x^*))\Vert^2 \\
    &+\frac{1}{32}\frac{\chi_1}{L \chi_1}\Vert \pi (\nabla F(x_0)-\nabla F(x^*))\Vert^2\\
    &\leq (\frac{L}2+\frac \mu 8+\frac L{16}+\frac L8+\frac {L^2}{2\mu}+\frac L{32})\Vert x_0-x^*\Vert^2
\end{align*}

In particular, as $\frac{\mu}4\Vert x-x^*\Vert^2\leq d_F(x,x^*)$ it implies that:

$$\mathbb{E}[\Vert x_t-x^*\Vert^2]\leq (\frac 12+\frac {23}8\frac{L}\mu+2\frac {L^2}{\mu^2})\Vert x_0-x^*\Vert^2e^{-\frac{t}{8\sqrt 2}\sqrt{\frac{\mu}L}} $$

Finally, we note that the expected number of gradients between $[0,T]$ is given by:

$$\mathbb{E}[\sum_{i=1}^n N_i(T)]=nT\,,$$

and similarly, the number of edges activated is given by:
$$\mathbb{E}[\sum_{1\leq i,j\leq n}M_{ij}(T)]=\sum_{1\leq i,j\leq n}\int_0^T\lambda_{ij} \,dt=\frac T2\mathrm{Tr}\,\Lambda.$$
\end{proof}

\subsection{Proof of the Lemma C.1}
\label{proof-important-lemma}
We first state a couple of inequalities that we will combine to obtain a bound on our Lyapunov function. In all this section, for a given variable $x$, we denote by $x^+$ the value of $x$ right after a Poisson update.
\begin{lemma}
First:
\begin{align}
\phi_A&\triangleq A_t(d_F(x^+,x^*)-d_F(x,x^*))+\tilde A_t(\Vert \tilde x^+-x^*\Vert^2-\Vert \tilde x-x^*\Vert^2) \notag\\
&+\eta A_t\langle  \tilde x-x,\nabla F(x)-\nabla F(x^*)\rangle +2\tilde\eta\tilde A_t\langle x- \tilde x,\tilde x-x^*\rangle\\
& \leq \Vert \nabla F(x)- \tilde y\Vert^2 \left( 
A_t \frac{L\gamma^2}{2} - A_t  \gamma  + \tilde A_t \tilde \gamma^2
\right) \notag\\
& +A_t\gamma \langle \nabla F(x)-\tilde y,y^*-\tilde y\rangle + 2\tilde\gamma \tilde A_t\langle \tilde y-y^*,\tilde x-x^*\rangle \\
& -2\tilde\gamma \tilde A_t \left(d_F(\tilde x,x^*)+d_F(x^*,x)-d_F(\tilde x,x) \right)\notag\\
&-\eta A_t(d_F(\tilde x,x)+d_F(x,x^*)-d_F(\tilde x,x^*)) -\tilde A_t\tilde \eta\Vert \tilde x-x^*\Vert^2+ \tilde A_t\tilde\eta \Vert x-x^*\Vert^2 \notag
\end{align}
\end{lemma}
\begin{proof}
First, we have to use optimality conditions and smoothness, as well as the separability of $F$:
\begin{align}
d_F(x^+,x^*)-d_F(x,x^*)&= d_F(x^+,x)-\langle x^+-x,\nabla F(x^*)-\nabla F(x)\rangle\\
&\leq \frac{L}2\Vert x^+-x\Vert^2-\langle x^+-x,\nabla F(x^*)-\nabla F(x)\rangle\\
&=\frac{L\gamma^2}{2}\Vert \tilde y-\nabla F(x)\Vert^2- \gamma \Vert \nabla F(x)- \tilde y\Vert^2 \notag\\
&+ \gamma \langle \nabla F(x)-\tilde y,y^*-\tilde y\rangle
\end{align}
Next, we note that, again using optimality conditions:
\begin{align}\Vert \tilde x^+-x^*\Vert^2-\Vert \tilde x- x^*\Vert^2&=2\langle \tilde x^+-\tilde x,\tilde x-x^*\rangle+\Vert \tilde x^+-\tilde x\Vert^2 \\
&=-2\tilde\gamma\langle \nabla F(x)- \tilde y,\tilde x-x^*\rangle + \tilde \gamma^2\Vert \nabla F(x)- \tilde y\Vert^2\\
&=-2\tilde\gamma \langle \nabla F(x)-\nabla F(x^*),\tilde x-x^*\rangle  + 2\tilde\gamma\langle \tilde y-y^*,\tilde x-x^*\rangle+\tilde \gamma^2\Vert \nabla F(x)- \tilde y\Vert^2\\
&=-2\tilde\gamma (d_F(\tilde x,x^*)+d_F(x^*,x)-d_F(\tilde x,x)) + 2\tilde\gamma \langle \tilde y-y^*,\tilde x-x^*\rangle +\tilde \gamma^2\Vert \nabla F(x)-\tilde y\Vert^2
\end{align}
Momentum in $x$ associated with the term $d_F(x, x^*)$ gives:
\begin{equation}
    \eta \langle  \tilde x-x,\nabla F(x)-\nabla F(x^*)\rangle =-\eta (d_F(\tilde x,x)+d_F(x,x^*)-d_F(\tilde x,x^*))
\end{equation}
and momentum in $\tilde x$ associated with $\Vert \tilde x - x^* \Vert^2$ leads to:
\begin{equation}
    2\tilde\eta\langle x- \tilde x,\tilde x-x^*\rangle =- 2\tilde\eta\Vert \tilde x-x^*\Vert^2+ 2\tilde\eta \langle x-x^*,\tilde x-x^*\rangle\leq -\tilde \eta\Vert \tilde x-x^*\Vert^2+\tilde\eta \Vert x-x^*\Vert^2
\end{equation}
\end{proof}

\begin{lemma}
Next, we show that if $\alpha B_t =\frac \delta 2 \tilde B_t$:
\begin{align}
    \phi_B&\triangleq  B_t(\Vert  y^+-y^*\Vert^2 - \Vert y - y^* \Vert^2) +\tilde B_t(\Vert \tilde y^+-y^*\Vert^2 - \Vert \tilde y - y^* \Vert^2) \notag\\
    & \quad +2 \alpha B_t \langle y-y^*,\tilde y -y\rangle-2 \theta \tilde B_t \langle y+z+\nu \tilde x, \tilde y - y^* \rangle + 2 \alpha C_t \langle \tilde y - y, z+y - y^* -z^* \rangle \\
    &\leq -\frac{\delta}{2} \tilde B_t\Vert \tilde y-y^*\Vert^2-\frac{\delta}{2} \tilde B_t \Vert  y-y^*\Vert^2-2\tilde \delta \tilde B_t\langle \nabla F(x)-\tilde y,y^*-\tilde y\rangle \notag\\
   & \quad + \delta  \tilde B_t \Vert \nabla F(x) -\nabla F(x^*) \Vert^2 +  \left((\delta + \tilde \delta)^2-\delta \right)\tilde B_t \Vert \nabla F(x)-y\Vert^2 \notag\\
   & \quad -2 \theta \tilde B_t \langle y + z  - y^* - z^*, \tilde y-y^* \rangle - 2 \theta \nu \tilde B_t\langle \tilde x - x^*, \tilde y-y^* \rangle + 2 \alpha C_t \langle \tilde y - y, z+y - y^* -z^* \rangle
\end{align}
\end{lemma}
\begin{proof}
 Using optimality conditions:
\begin{align}
    \Vert \tilde y^+-y^*\Vert^2-\Vert \tilde y-y^*\Vert^2
    &=2\langle \tilde y-y^*,\tilde y^+-\tilde y\rangle+\Vert \tilde y^+-\tilde y\Vert^2\\
    &= 2\delta\langle \nabla F(x) - \tilde y,\tilde y-y^*\rangle+2\tilde \delta\langle \nabla F(x) - \tilde y, \tilde y-y^*\rangle + (\delta + \tilde \delta)^2\Vert \nabla F(x)- \tilde y\Vert^2\\
    &= -2 \tilde \delta\langle \nabla F(x) - \tilde y,y^* - \tilde y \rangle + \delta  \Vert \nabla F(x) -\nabla F(x^*) \Vert^2 - \delta \Vert \tilde y - y^* \Vert^2 \notag\\
   &  \quad + \left((\delta + \tilde \delta)^2-\delta \right)\Vert \nabla F(x)- \tilde y\Vert^2
\end{align}
The momentum in $\tilde y$ associated with the term $\Vert \tilde y - y^*\Vert^2$ gives:
\begin{align}
    -2 \theta \tilde B_t \langle y+z+\nu \tilde x, \tilde y - y^* \rangle =& -2 \theta \tilde B_t \langle y + z  - y^* - z^*, \tilde y-y^* \rangle - 2 \theta \nu \tilde B_t\langle \tilde x - x^*, \tilde y-y^* \rangle
\end{align}
The momentum in $y$ associated with the term $\Vert y - y^*\Vert^2$ gives:
\begin{align}
    2 \alpha B_t \langle \tilde y - y, y - y^*\rangle &= -  \alpha B_t \Vert  y - y^* \Vert^2   -  \alpha B_t \Vert  \tilde y - y \Vert^2 +  \alpha B_t \Vert  \tilde y - y^* \Vert^2
\end{align}
and the one associated with $\Vert  y+z - y^* - z^*\Vert^2$:
\begin{align}
    2 \alpha C_t \langle \tilde y - y, z+y - y^* -z^* \rangle
\end{align}
\end{proof}

\begin{lemma}
Finally, assuming $\theta \tilde B_t =  \tilde \beta \tilde C_t =  \alpha C_t$, letting $1 \geq \tilde \tau >0$, \\ $z_{ij}^+=z-\beta (\mathbf{e}_i-\mathbf{e}_j)(\mathbf{e}_i-\mathbf{e}_j)^\mathsf{T}(y+z)$ and $\tilde z_{ij}^+=\tilde z-\tilde \beta(\mathbf{e}_i-\mathbf{e}_j)(\mathbf{e}_i-\mathbf{e}_j)^\mathsf{T}(y+z)$:
\begin{align}
    \phi_{C} &
    -2 \theta \tilde B_t \langle y + z  - y^* - z^*, \tilde y-y^* \rangle + 2 \alpha C_t \langle \tilde y - y, z+y - y^* -z^* \rangle
    \triangleq \notag \\
    & \quad \sum_{ij}\lambda_{ij}  C_t \Big(\Vert y + z_{ij}^+  - y^* - z^* \Vert^2-\Vert y+z - y^* -z^*\Vert^2 \Big) \notag \\
    & + \sum_{ij}\lambda_{ij}  \tilde C_t \Big(\Vert \tilde z_{ij}^+-z^*\Vert_{\mathbf{\Lambda}^+}^2-\Vert \tilde z-z^*\Vert_{\mathbf{\Lambda}^+}^2\Big) + 2 \tilde \alpha \tilde C_t \langle z - \tilde z, \tilde z- z^*\rangle_{\mathbf{\Lambda}^+} \\
    &+2 \alpha C_t \langle \tilde z +\tilde y- z - y, z+y - y^* -z^* \rangle-2 \theta \tilde B_t \langle y + z  - y^* - z^*, \tilde y-y^* \rangle \notag\\
    &\leq  \tilde \beta^2 \chi_2 \tilde C_t\sum_{(i,j)\in \mathcal{E} }\lambda_{ij} \Vert (\mathbf{e}_i-\mathbf{e}_j)(\mathbf{e}_i-\mathbf{e}_j)^\mathsf{T}(y+z)\Vert^2 \notag\\
    &+ \beta(\beta-1) C_t \sum_{(i,j)\in \mathcal{E} }\lambda_{ij} \Vert (\mathbf{e}_i-\mathbf{e}_j)(\mathbf{e}_i-\mathbf{e}_j)^\mathsf{T}(y+z) \Vert^2 \notag\\
    &- \alpha C_t \Vert y + z  - y^* - z^* \Vert^2+ \tilde \alpha\chi_1 \tilde C_t \Vert z-z^*\Vert^2-\tilde\alpha \tilde C_t \Vert\tilde z-z^*\Vert_{\mathbf{\Lambda}^+}^2 \notag\\
    &-\tilde \tau\frac 12 \tilde \beta \frac{\nu}L \tilde C_t \Vert z-z^*\Vert^2+  \tilde \tau \frac{\nu}L \frac{2\alpha \theta}{\delta} B_t\Vert y-y^*\Vert^2
\end{align}
\end{lemma}
\begin{proof}

Having in mind that $\pi(y^*+z^*)=0$ and $\mathbf{\Lambda}^+\mathbf{\Lambda}=\pi$, we get, using Lemma \ref{stable-span} and Lemma \ref{resistance} on the inequality \eqref{ineq1}:
\begin{align}
    \Delta_{\tilde z}&\triangleq \sum_{(i,j)\in \mathcal{E} }\lambda_{ij} \big(\Vert \tilde z_{ij}^+-z^*\Vert_{\mathbf{\Lambda}^+}^2-\Vert \tilde z-z^*\Vert_{\mathbf{\Lambda}^+}^2\big)\\
    &=\sum_{(i,j)\in \mathcal{E} }\lambda_{ij} 2\langle \tilde z-z^*, \tilde z_{ij}^+- \tilde z\rangle_{\mathbf{\Lambda}^+}+\Vert \tilde z_{ij}^+-\tilde z\Vert_{\mathbf{\Lambda}^+}^2\\
    &=-2\tilde\beta\sum_{(i,j)\in \mathcal{E} }\lambda_{ij} \langle \tilde z-z^*,(\mathbf{e}_i-\mathbf{e}_j)(\mathbf{e}_i-\mathbf{e}_j)^\mathsf{T}(y+z-y^*-z^*)\rangle_{\mathbf{\Lambda}^+} \notag \\
    & \quad + \sum_{(i,j)\in \mathcal{E} }\lambda_{ij} \tilde \beta^2\Vert (\mathbf{e}_i-\mathbf{e}_j)(\mathbf{e}_i-\mathbf{e}_j)^\mathsf{T}(y+z)\Vert_{\mathbf{\Lambda}^+}^2\\
    &\leq -2\tilde \beta\langle \tilde z-z^*,\mathbf{\Lambda}^+\mathbf{\Lambda} (y+z)\rangle  + \chi_2\tilde \beta^2\sum_{(i,j)\in \mathcal{E} }\lambda_{ij} \Vert (\mathbf{e}_i-\mathbf{e}_j)(\mathbf{e}_i-\mathbf{e}_j)^\mathsf{T}(y+z)\Vert^2\label{ineq1}\\
    &=-2\tilde\beta\langle\tilde z-z^*,\pi(y+z)\rangle +  \chi_2\tilde \beta^2\sum_{(i,j)\in \mathcal{E} }\lambda_{ij} \Vert (\mathbf{e}_i-\mathbf{e}_j)(\mathbf{e}_i-\mathbf{e}_j)^\mathsf{T}(y+z)\Vert^2
\end{align}
Noting that $y^+$, the value of $y$ after a Poisson update, is equal to $y$:
\begin{align}
    \Delta_z&\triangleq\sum_{(i,j)\in \mathcal{E} }\lambda_{ij} (\Vert y^+ + z_{ij}^+ - y^* - z^* \Vert^2 - \Vert y+z - y^* -z^*\Vert^2)\\
    &= 2\sum_{(i,j)\in \mathcal{E} }\lambda_{ij}  \langle y + z_{ij}^+ - y -z, y + z - y^* - z^* \rangle + \sum_{(i,j)\in \mathcal{E} }\lambda_{ij} \Vert y + z_{ij}^+ - y - z \Vert^2 \\
    & = -2 \sum_{(i,j)\in \mathcal{E} }\beta\lambda_{ij} \langle (\mathbf{e}_i-\mathbf{e}_j)(\mathbf{e}_i-\mathbf{e}_j)^\mathsf{T}(y+z), y + z - y^* - z^* \rangle \notag\\
    & \quad + \sum_{(i,j)\in \mathcal{E} }\beta^2\lambda_{ij} \Vert (\mathbf{e}_i-\mathbf{e}_j)(\mathbf{e}_i-\mathbf{e}_j)^\mathsf{T}(y+z) \Vert^2 \\  % & = -2 \sum_{(i,j)\in \mathcal{E}(t)}\lambda_{ij}\beta(t) \langle \frac 12(e_i-e_j)(e_i-e_j)^\mathsf{T}(y+z), \pi(y + z) - \pi(y^* + z^*) \rangle + \lambda(t)\beta_{ij}(t)^2 \Vert \frac 12(e_i-e_j)(e_i-e_j)^\mathsf{T}(y+z) \Vert^2 \\
&= \beta(\beta-1)\sum_{(i,j)\in \mathcal{E} }\lambda_{ij} \Vert (\mathbf{e}_i-\mathbf{e}_j)(\mathbf{e}_i-\mathbf{e}_j)^\mathsf{T}(y+z) \Vert^2
\end{align}
The momentum in $\tilde z$ associated with $\Vert \tilde z - z^*\Vert^2_{\mathbf{\Lambda}^+}$ gives:
\begin{align}
    2 \tilde \alpha \tilde C_t \langle z - \tilde z, \tilde z- z^*\rangle_{\mathbf{\Lambda}^+}
    &\leq \tilde \alpha  \chi_1 \tilde C_t \Vert z-z^*\Vert^2-\tilde\alpha \tilde C_t \Vert\tilde z-z^*\Vert_{\mathbf{\Lambda}^+}^2
\end{align}
And the one in $z$ associated with $\Vert  y+z - y^* - z^*\Vert^2$ gives:
\begin{align}
    2 \alpha C_t \langle \tilde z - z, z+y - y^* -z^* \rangle
\end{align}
Then, assuming that $\theta \tilde B_t = \tilde\beta \tilde C_t = \alpha C_t$, we have:
\begin{align}
    & 2 \alpha C_t \langle \tilde y - y, z+y - y^* -z^* \rangle
    -2  \tilde \beta \tilde C_t \langle \tilde z-z^*,y+z-y^*-z^*\rangle \notag\\
    &-2 \theta \tilde B_t \langle y + z  - y^* - z^*, \tilde y-y^* \rangle + 2 \alpha C_t \langle \tilde z - z, z+y - y^* -z^* \rangle\\
    & = -2 \alpha C_t \Vert y + z  - y^* - z^* \Vert^2 \label{yplusz}
\end{align}
At this stage, we split the negative term \eqref{yplusz} into two halves, upper-bounding one of the halves by remembering that $\frac \nu L \leq 1$ and introducing $1\geq\tilde \tau>0$:
\begin{align}
-\alpha C_t\Vert y+z-y^*-z^*\Vert^2\leq &-\tilde \tau\frac{\nu}L\alpha C_t\Vert y+z-y^*-z^*\Vert^2\\
&=-\tilde \tau \tilde \beta  \frac{\nu}L \tilde C_t\Vert y+z-y^*-z^*\Vert^2\\
&\leq -\tilde \tau\frac 12 \tilde \beta \frac{\nu}L \tilde C_t \Vert z-z^*\Vert^2+\tilde \tau  \tilde \beta  \frac{\nu}L \tilde C_t \Vert y-y^*\Vert^2\\
&=-\tilde \tau\frac 12 \tilde \beta \frac{\nu}L \tilde C_t \Vert z-z^*\Vert^2+  \tilde \tau \frac{\nu}L \frac{2\alpha \theta}{\delta} B_t\Vert y-y^*\Vert^2
\end{align}
\end{proof}

Keeping in mind that $\theta \tilde B_t = \tilde \beta \tilde C_t = \alpha C_t$ and $\frac \delta 2 \tilde B_t=\alpha B_t$, we put everything together. Defining $\Psi = \frac {\partial \Phi}{\partial t}+\phi_A + \phi_B +  \phi_C$, we have:

\begin{align}
 \Psi & \leq   \Vert \nabla F(x)- \tilde y\Vert^2 \left( 
A_t \frac{L\gamma^2}{2} - A_t  \gamma  + \tilde A_t \tilde \gamma^2
+ \left((\delta + \tilde \delta)^2-\delta \right)\tilde B_t \right)\label{l1}\\
& + \Vert \tilde z -  z^*\Vert^2_{\mathbf{\Lambda}^+} \left(-\tilde \alpha \tilde C_t + \tilde C'_t \right)\label{l2}\\
& + \Vert \tilde y - y^* \Vert^2 ( \tilde B'_t   -\frac \delta 2 \tilde B_t)\label{l3}\\
&+\Vert x-x^*\Vert^2(\tilde A_t \tilde \eta - \tilde A_t \frac{\nu \tilde \gamma}{2}) \label{l4}\\
& +\Vert \tilde x - x^* \Vert^2 (\tilde A'_t - \tilde A_t \tilde \eta) \label{l5}\\
& + \Vert \nabla F(x) -\nabla F(x^*) \Vert^2 (\delta \tilde B_t - \frac{\tilde \gamma}{2L}\tilde A_t )\label{l6}\\
& +\sum_{(i,j)\in \mathcal{E} }\lambda_{ij} \Vert (\mathbf{e}_i-\mathbf{e}_j)(\mathbf{e}_i-\mathbf{e}_j)^\mathsf{T}(y+z) \Vert^2\left( \chi_2\tilde\beta^2\tilde C_t+\beta(\beta-1)C_t\right) \label{l8} \\
& + \Vert z - z^*\Vert^2  ( \chi_1\tilde \alpha -\tilde \tau\frac 12 \tilde \beta \frac{\nu}L) \tilde C_t\label{l9}\\
& + \Vert y - y^* \Vert (B'_t -(1-\tilde \tau \frac{\nu}L \frac{2\theta}{\delta})\alpha B_t )\label{l10}\\
& + \Vert y + z - y^* -z^* \Vert^2 ( C'_t - \alpha C_t) \label{l11} \\
& +d_F(x,x^*)( A'_t -\eta A_t)\label{l12}\\
& + d_F(\tilde x,x)( -A_t\eta  +2\tilde\gamma \tilde A_t) \label{l13}\\
& +d_F(\tilde x,x^*) ( A_t\eta -2\tilde\gamma \tilde A_t) \label{l14}\\
& +\langle \nabla F(x)-\tilde y,y^*-\tilde y\rangle ( -2 \tilde \delta \tilde B_t + \gamma A_t) \label{l15}\\
&+\langle \tilde y-y^*,\tilde x-x^*\rangle \left( 2\tilde\gamma \tilde A_t -2\theta \nu \tilde B_t\right)\label{l16}
\end{align}

\paragraph{Resolution}

\label{ResoGD}
\begin{proof}[Proof of Lemma \ref{important-lemma}] Our goal is to put to zero all of the terms appearing next to scalar products and make the factors of positive quantities (norms or divergences) less or equal to zero. Given our relations, we guess that each exponential has the same rate. Thus, with $\tau>0$, we fix $\frac{\delta}2=\tilde \eta=\eta=\tilde\alpha=\tau\sqrt{\frac \nu L}$, which leads to $\tilde \gamma =\frac{2\tau}{\sqrt{\nu L}}$ using Eq. \eqref{l4}. Also, from Eq. \eqref{l14}:
$$4\tilde A_t=\nu A_t.$$
Next, from Eq. \eqref{l6} and Eq. \eqref{l16}, it's necessary that: $$2L\delta=\theta \nu\,,$$
thus $\theta=4\tau\sqrt{\frac L{\nu}}$. From Eq. \eqref{l16}, we get:
$$\tilde A_t=2L\nu\tilde B_t.\,$$
Combining this previous equation with Eq. \eqref{l15}, as $4\tilde A_t=\nu A_t$, we have $\tilde \delta=4L\gamma $. 
Next, Eq. \eqref{l1} gives, with the equations above:
\begin{align*}
A_t(\frac{L\gamma^2}{2} - \gamma)  + \tilde A_t \tilde \gamma^2
+ \left((\delta + \tilde \delta)^2-\delta \right) \tilde B_t
&=A_t \frac{L\gamma^2}{2} - A_t  \gamma  + \frac{\nu}{4}\tilde \gamma^2A_t + \left(\delta^2 + \tilde \delta^2 +\delta \right) \frac{A_t}{8L}\\
&=A_t \left(\frac{L\gamma^2}{2} -  \gamma + \frac{\nu}{4}\frac{4\tau^2}{\nu L}\right) +A_t(2\tau \sqrt{\frac{\nu}{L}} +4\tau^2\frac{\nu}{L} + 16 L^2\gamma^2) \frac{1}{8L} \notag \\
& \leq A_t(\gamma^2\frac 52L - \gamma + \frac 5 4\frac {\tau^2}L + \frac{\sqrt{2}}{8} \frac \tau L)
\end{align*}
We thus pick $\gamma=\frac 1{ 4 L}$ and $\tau= \frac{1}{8}$, so that $\tilde \delta=1$. Via Eq. \eqref{l10}, we fix $\tilde \tau=\frac 18<1$. With Eq. \eqref{l9}, we then get:
$$\tilde \beta=2 \chi_1\sqrt{\frac L\nu}$$

We also put $\alpha=2\tau \sqrt{\frac \nu L}$ and only one last equation, Eq. \eqref{l8}, needs to be satisfied, for which we pick $\beta=\frac 12$:
$$ \chi_2\tilde \beta^2\tilde C_t+\beta(\beta-1)C_t=( \chi_2\tilde \beta \alpha-\frac 14)C_t$$
This implies that $ \chi_2 \chi_1\leq \frac 12$. In summary, we set:
$$\begin{array}{lllllll}
        \eta  = \frac 18 \sqrt{\frac \nu L}
         &  \; \gamma = \frac{1}{4L}
         &  \;\delta = \frac 14 \sqrt{\frac \nu L}
         & \;\alpha = \frac 14 \sqrt{\frac \nu L}
         & \;\beta = \frac 12
         &  \;\theta = \frac 12 \sqrt{\frac L \nu }&\; \tilde\tau=\frac 18
         \\
         \tilde \eta = \frac 18 \sqrt{\frac \nu L}
         & \; \tilde \gamma = \frac 1{4 \sqrt{\nu L}}
         & \; \tilde \delta = 1
         & \;\tilde \alpha = \frac 18 \sqrt{\frac \nu L}
         & \;\tilde \beta = 2  \chi_1\sqrt{\frac L \nu }
         & \; \nu = \frac \mu 2 &\; \tau=\frac 18
    \end{array}$$

Now, let's pick: $A_t=e^{t \tau \sqrt{\frac{\nu}L}}=e^{\frac{t}{8\sqrt 2}\sqrt{\frac{\mu}L}}$ so that:

$$\begin{array}{lllllll}
A_t=e^{\frac{t}{8\sqrt 2}\sqrt{\frac{\mu}L}} & \; \tilde A_t=\frac{\mu}8e^{\frac{t}{8\sqrt 2}\sqrt{\frac{\mu}L}} & \; \tilde B_t=\frac{1}{8L}e^{\frac{t}{8\sqrt 2}\sqrt{\frac{\mu}L}} \\
B_t=\frac{1}{16L}e^{\frac{t}{8\sqrt 2}\sqrt{\frac{\mu}L}} & \; C_t=\frac{1}{2\mu}e^{\frac{t}{8\sqrt 2}\sqrt{\frac{\mu}L}}  & \; \tilde C_t=\frac{1}{32 \chi_1L}e^{\frac{t}{8\sqrt 2}\sqrt{\frac{\mu}L}}\,.
\end{array}$$

This implies that $\Psi\leq 0$.

\end{proof}

\section{Comparison of complexities with related work}
\label{proof-optimality-dadao}
\begin{proof}[Proof of Prop. \ref{optimality-dadao}]
We consider the settings of concurrent works and, given any gossip matrix admissible for them, we show that DADAO has better rates.

\paragraph{Comparison with ADOM+ \cite{kovalev2021lower}.} The ADOM+ setting is:
\begin{itemize}
\item \textit{Gossip matrices}: Laplacians $\mathcal L$ with $\Vert \mathcal{L} x-x\Vert^2\leq (1-\frac 1{\chi})\Vert x\Vert^2$ for $\chi \geq 1$ and $x \in \mathbf{1}^\perp$.
\item \textit{Total number of gradients to reach $\epsilon$ precision}: $\mathcal{O}(n\sqrt{\frac L\mu}\log \frac{1}{\epsilon})$.
\item \textit{Total number of edges activated to reach $\epsilon$ precision}: $\mathcal{O}(|\mathcal{E}|\chi\sqrt{\frac L\mu}\log \frac{1}{\epsilon})$.
\end{itemize}
If we take an eigenvector of $\mathcal L$ for the eigenvalue $\frac 1 {\chi_1[\mathcal L]}$, then the Laplacian inequality directly leads to $\left (1 - \frac 1 {\chi_1[\mathcal L]} \right)^2 \leq 1-\frac 1{\chi}$ and we have:
\begin{align*}
    \frac 1 {\chi_1[\mathcal L]} \left ( \frac 1 {\chi_1[\mathcal L]} - 2 \right) \leq - \frac 1 \chi \, ,
\end{align*}
leading to:
\begin{align*}
    \frac 2 {\chi_1[\mathcal L]} \geq \frac 1 {\chi_1[\mathcal L]} \left ( 2 - \frac 1 {\chi_1[\mathcal L]} \right) \geq \frac 1 \chi \, .
\end{align*}
Thus, $ \chi_1[\mathcal{L}]\leq 2 \chi$. Remind that, by definition, $ \chi_2 \leq  \chi_1$. Note that in ADOM+, we also have $\Vert \mathcal L \Vert \leq 2$, so that $\mathrm{Tr} (\mathcal L) \leq 2n$. Then, for any Laplacian matrix $\mathcal L$ valid for ADOM+, we consider for DADAO a $ \Lambda  $ defined as followed: $$\Lambda =\sqrt{2 \chi_1[\mathcal{L}] \chi_2[\mathcal{L}]}\mathcal{L}$$
Then, DADAO has the same gradient complexity as ADOM+, but the number of communications of DADAO is:
$$\frac T2\mathrm{Tr}\,\Lambda \leq \frac 12\sqrt{2 \chi_1[\mathcal{L}] \chi_2[\mathcal{L}]}\sqrt{\frac L\mu}\log \left( \frac{1}{\epsilon} \right)2n = \mathcal{O}(|\mathcal{E}|\chi\sqrt{\frac L\mu}\log \frac{1}{\epsilon})$$

Consequently, DADAO is better than ADOM+ for all valid configurations of ADOM+ (in the fixed graph topology setting) and DADAO. We note that for the complete graph, $\chi= \mathcal O (1)$ and $\vert \mathcal E \vert= \mathcal O (n^2)$, whereas $n \sqrt{2 \chi_1[\mathcal{L}] \chi_2[\mathcal{L}]} = \mathcal O (n)$ and DADAO has strictly better communication rates than ADOM+ on this graph.
\paragraph{Comparison with Gradient Tracking methods AGT, OGT \cite{li2021accelerated, song2021optimal}.} The setting is described by:
\begin{itemize}
\item \textit{Gossip matrices}: Any matrix $\mathcal L = \mathbf{I} - W$ with $W$ symmetric doubly-stochastic, \ie~such that $\sum_iW_{ij}=1$ and $\sum_j W_{ij}=1$.
\item \textit{Total number of gradients to reach $\epsilon$ precision}: $\mathcal{O}(n\sqrt{\frac L\mu}\log \frac{1}{\epsilon})$.
\item \textit{Total number of edges activated to reach $\epsilon$ precision}: $\mathcal{O}(|\mathcal{E}|\frac 1{\sqrt{\theta}}\sqrt{\frac L\mu}\log \frac{1}{\epsilon})$, where $\theta=1-\Vert W-\frac 1 n \mathbf{1}\mathbf{1}^{\mathsf{T}}\Vert$.
\end{itemize}
If $\kappa =\mathcal{O}(\frac{|\mathcal{E}|}{n})$, using Lemma \ref{chi-lemma}, we have:

$$\frac 12 \sqrt{2\chi_1[\mathcal L]\chi_2[\mathcal L]}\mathrm{Tr}\,\mathcal{L}= \mathcal{O}(|\mathcal{E}|\sqrt{\rho[\mathcal L]})$$

Furthermore, as $W$ is stochastic, $\Vert \mathcal L \Vert \leq 2$ and $\theta \leq \frac 1 {\chi_1[\mathcal L]}$, leading to $\rho \leq \frac 2 \theta$. Consequently, the communication complexity of DADAO run for $\mathcal{O}(\sqrt{\frac L\mu}\log \frac{1}{\epsilon})$ iterations recovers the rate of GT. Furthermore, for the complete graph, for any Laplacian $\mathcal L$ with $ \rho[\mathcal L] =  \mathcal O (1)$ (remind that, by definition $\rho \geq 1$), we have $\sqrt{2\chi_1[\mathcal L]\chi_2[\mathcal L]}\mathrm{Tr}\,\mathcal{L} = \mathcal O (n)$ whereas $\vert \mathcal E \vert = \mathcal O (n^2)$, thus DADAO uses an order of magnitude less communications than GT need to. 

Note that for the Star-graph, there is no admissible Laplacian in the framework of \cite{song2021optimal}.
%For the star-graph, note that necessarily, if $1$ is the central node, then $\lambda_{i1}=\frac 12$ while $\lambda_{1i}=\frac 1n$.%Note that it is difficult to design doubly stochastic %The same conclusion holds for the star graph, as $\sqrt{2\chi_1[\mathcal L]\chi_2[\mathcal L]}\mathrm{Tr}\,\mathcal{L} = \mathcal O (n)$ but $\sqrt{\chi_1}|\mathcal{E}=n^{3/2}$.
\paragraph{Comparison with MSDA \cite{scaman17a}.} The setting is described by:
\begin{itemize}
\item \textit{Gossip matrices}: any Laplacians admissible for DADAO.
\item \textit{Total number of gradients to reach $\epsilon$ precision}: $\mathcal{O}(n\sqrt{\frac L\mu}\log \frac{1}{\epsilon})$.
\item \textit{Total number of edges activated to reach $\epsilon$ precision}: $\mathcal{O}(|\mathcal{E}|\sqrt{\rho}\sqrt{\frac L\mu}\log \frac{1}{\epsilon})$.
\end{itemize}
If $\kappa =\mathcal{O}(\frac{|\mathcal{E}|}{n})$, using Lemma \ref{chi-lemma}, we see that, for any Laplacian $\mathcal L$:

$$\frac 12 \sqrt{2\chi_1[\mathcal L]\chi_2[\mathcal L]}\mathrm{Tr}\,\mathcal{L}= \mathcal{O}(|\mathcal{E}|\sqrt{\rho})$$

Consequently, the communication complexity of DADAO run for $\mathcal{O}(\sqrt{\frac L\mu}\log \frac{1}{\epsilon})$ iterations is better than MSDA. Furthermore, for the complete graph, for any Laplacian $\mathcal L$ with $ \rho[\mathcal L] =  \mathcal O (1)$ (remind that, by definition $\rho \geq 1$), we have $\sqrt{2\chi_1[\mathcal L]\chi_2[\mathcal L]}\mathrm{Tr}\,\mathcal{L} = \mathcal O (n)$ whereas $\vert \mathcal E \vert = \mathcal O (n^2)$, thus DADAO uses an order of magnitude less communications than MSDA need to.

\paragraph{Comparison with the Continuized framework \cite{even2021continuized}.} 
In this framework and using their notations, we have:
\begin{itemize}
\item \textit{Gossip matrices}: all Laplacian matrices $\mathcal L$ verifying $\mathrm{Tr} \mathcal L = 2$.
\item \textit{Total number of gradients to reach $\epsilon$ precision}: $\mathcal{O}(\frac{1}{\theta'_\text{ARG}}\sqrt{\frac L\mu}\log \frac{1}{\epsilon})$.
\item \textit{Total number of edges activated to reach $\epsilon$ precision}: $\mathcal{O}(\frac{1}{\theta'_\text{ARG}}\sqrt{\frac L\mu}\log \frac{1}{\epsilon})$.
\end{itemize}
First, we slightly rephrase one of the proposition of \cite{even2021continuized} to match our notations:

\begin{lemma}\label{continuized}For a Laplacian $\mathcal L$ with $\mathrm{Tr}\,\mathcal{L}=2$, the communication and gradient complexity of the continuized framework is given by:
    $$\mathcal{O}(\sqrt{\chi_1 [\mathcal L]\chi_2[\mathcal L]}\sqrt{\frac L \mu}\log \frac 1\epsilon)\,.$$
\end{lemma}
\begin{proof}
Under the notation of \cite{even2021continuized}, $\theta'_{\text{ARG}} = \sqrt{\mu_\text{gossip}/\max_{\{v,w\}} \frac{R_{vw}}{P_{vw}}}$, with $\mu_\text{gossip} = \frac 1 {\chi_1[\mathcal L]}$ in our setting. Moreover, we remind that in \cite{even2021continuized}, $\mathcal{L}=AA^\mathsf{T}$ and that $Ae_{vw}=\sqrt{P_{vw}}(e_v-e_w)$. Thus, by definition:
\begin{align}
\frac{R_{vw}}{P_{vw}}&\triangleq \frac{e_{vw}^\mathsf{T}A^+Ae_{vw}}{P_{vw}}\\
&=\frac{e_{vw}^\mathsf{T}A^+(e_v-e_w)}{\sqrt{P_{vw}}}\\
&=\frac{(A^{+\mathsf{T}}e_{vw})^\mathsf{T}(e_v-e_w)}{\sqrt{P_{vw}}}\\
&=\frac{((AA^\mathsf{T})^{+\mathsf{T}}Ae_{vw})^\mathsf{T}(e_v-e_w)}{\sqrt{P_{vw}}}\\
&=(e_v-e_w)^\mathsf{T}\mathcal{L}^+(e_v-e_w)\, ,
\end{align}
and we have $\theta'_{\text{ARG}} = \frac 1 {\sqrt{ 2\chi_1[\mathcal L] \chi_2[\mathcal L]}}$.
\end{proof}
Next, if $\mathrm{Tr}\,\mathcal L=2$, we proved in Lemma \ref{chi-lemma}that $2\sqrt{\chi_1[\mathcal L]\chi_2[\mathcal L]}\geq (n-1)$. Thus, we see that the gradient complexity of DADAO in $\mathcal{O}(n\sqrt{\frac L\mu}\log \frac{1}{\epsilon})$ is always better than  the one of the continuized framework. If we write $f(n) = \Omega( g(n))$ the fact that there is a constant $C>0$ and $n_0 \in \mathbb{N}$ such that $\forall n\geq n_0, \; C g(n) \leq f(n)$, then, in the cycle graph, for any $\mathcal L$ with $\mathrm{Tr}\,\mathcal L=2$, $\chi_1[\mathcal L] = \Omega(n^3)$ and $\chi_2[\mathcal L] = \Omega(n)$. Thus DADAO uses an order of magnitude less gradients than the continuized framework for this graph.

\end{proof}

\section{Practical Implementation}
\label{PracticalImplem}

In this section, we describe in more detail the implementation of our algorithm. As we did not physically execute our method on a compute network but carried it out on a single machine, all the asynchronous computations and communications had to be simulated. Thus, we will first discuss the method we followed to simulate our asynchronous framework before detailing the practical steps of our algorithm through a pseudo-code.

\subsection{Simulating the Poisson Point Processes}
\label{simuPPP}
To emulate the asynchronous setting, before running our algorithm, we generate $2$ independent sequences of jump times at the graph's scale: one for the computations and one for the communications. As we considered independent P.P.Ps, the time increments follow a Poisson distribution. At the graph's scale, each node spiking at a rate of $1$, the Poisson parameter for the gradient steps process is $n$. Following the experimental setting of the Continuized framework \citep{even2021continuized}, we considered that all edges in $ \mathcal E  $ had the same probability of spiking. Thus, given any graph $\mathcal E  $ and $\mathcal L$ its corresponding standard Laplacian with unit edge weights, we computed the parameter $\lambda$ of the communication process:
\begin{equation}
    \lambda = \sqrt{2 \chi_1\left[\frac{\mathcal L }{\vert \mathcal E   \vert}\right] \chi_2\left[\frac{\mathcal L }{\vert \mathcal E   \vert}\right]}.
\end{equation}
Having generated the $2$ sequences of spiking times at the graph's scale, we run our algorithm playing the events in order of appearance, attributing the \textit{location} of the events by sampling uniformly one node if the event is a gradient step or sampling uniformly an edge in $\mathcal{E}  $ if it is a communication. \\
\subsection{Pseudo Code}\label{parameters}
We keep the notations introduced in Eq. \eqref{dynamic} and recall the following constant values specified in Appendix \ref{ResoGD}:
\begin{equation*}
    \begin{array}{llllll}
        \eta  = \frac 18 \sqrt{\frac \nu L}
         &  \; \gamma = \frac{1}{4L}
         &  \;\delta = \frac 14 \sqrt{\frac \nu L}
         & \;\alpha = \frac 14 \sqrt{\frac \nu L}
         & \;\beta = \frac 12
         &  \;\theta = \frac 12 \sqrt{\frac L \nu }
         \\
         \tilde \eta = \frac 18 \sqrt{\frac \nu L}
         & \; \tilde \gamma = \frac 1{4 \sqrt{\nu L}}
         & \; \tilde \delta = 1
         & \;\tilde \alpha = \frac 18 \sqrt{\frac \nu L}
         & \;\tilde \beta = 2  \chi_1[\Lambda] \sqrt{\frac L \nu }
         & \; \nu = \frac \mu 2
    \end{array}
\end{equation*}
For the sake of completeness, we also specify the matrix $\mathcal A$ describing the linear ODE \eqref{ODE}:
\begin{equation}
\label{def-A}
    \mathcal A = 
    \begin{pmatrix}
    -\eta & \eta & 0 & 0 & 0 & 0 \\
    \tilde \eta & -\tilde \eta & 0 & 0 & 0 & 0 \\
    0 & 0 & -\alpha & \alpha & 0 & 0 \\
    0 & - \theta \nu & - \theta & 0 & -\theta & 0 \\
    0 & 0 & 0 & 0 & - \alpha & \alpha \\
    0 & 0 & 0 & 0 & \tilde \alpha & - \tilde \alpha 
    \end{pmatrix} 
\end{equation}
As described in Appendix \ref{simuPPP}, we call \texttt{PPPspikes} the process mentioned above, returning the ordered sequence of events and time of spikes of the two P.P.Ps. Then, we can write the pseudo-code of our implementation of the DADAO optimizer in Algorithm \ref{PseudoCode}.

 \begin{center}
\resizebox{0.95\columnwidth}{!}{
\begin{algorithm}[H]
\caption{Pseudo-code of our implementation of DADAO on a single machine.}
\SetAlgoLined
\SetKwBlock{Init}{Initialize}{}
\SetKwProg{Para}{In parallel}{ continuously do:}{}
\SetKwComment{Comment}{// }{}
\label{PseudoCode}

\KwIn{On each machine $i \in \{1,...,n\}$, an oracle able to evaluate $\nabla f_i$, Parameters $\mu, L,  \chi_1, t_{\max}, n, \lambda$. \\
$\quad \quad \quad $ The graph $\mathcal E $.}
\Init(on each machine $i \in \{1,...,n\}$:){
Set $X^{(i)} = (x_i, \tilde x_i, \tilde y_i)$ and $Y^{(i)} = (y_i, z_i, \tilde z_i)$ to $0$ \;
Set constants $\nu,\tilde\eta,\eta,\gamma,\alpha,\tilde\alpha,\theta,\delta,\tilde\delta,\beta,\tilde\beta$ using $\mu, L,  \chi_1$\;
Set $\mathcal A$\;
$T^{(i)} \gets 0$ \;
}
\texttt{ListEvents}, \texttt{ListTimes} $\gets$ \texttt{PPPspikes}$(n, \lambda, t_{\max})$ \;
$n_\text{events} \gets \vert \texttt{ListEvents}\vert$ \;
\For{$k \in [\![1, n_\textup{\text{events}}]\!]$}{

\If{\textup{\texttt{ListEvents}}$[k]$ is to take a gradient step}{
$i \sim \mathcal U ([\![1,n]\!])$ \;
$\begin{pmatrix}
X^{(i)} \\
Y^{(i)}
\end{pmatrix}
 \gets \exp \left( (\textup{\texttt{ListTimes}}[k]-T^{(i)})\mathcal{A}\right)
\begin{pmatrix}
X^{(i)} \\
Y^{(i)}
\end{pmatrix}$ \;
$g_i \gets \left( \nabla f_i(x_i) - \nu x_i - \tilde y_i \right)$\tcp*[r]{Local gradient computation.}
$x_i \gets x_i -\gamma g_i$ \;
$\tilde x_i \gets \tilde x_i - \tilde \gamma g_i$ \;
$\tilde y_i \gets \tilde y_i + (\delta + \tilde \delta) g_i$ \;
$T^{(i)}\gets \texttt{ListTimes}[k]$ \;
}
\ElseIf{\textup{\texttt{ListEvents}}$[k]$ is to take a communication step}{
$(i,j) \sim \mathcal U (\mathcal E)$ \;
$\begin{pmatrix}
X^{(i)} \\
Y^{(i)}
\end{pmatrix}
 \gets \exp \left( (\textup{\texttt{ListTimes}}[k]-T^{(i)})\mathcal{A}\right)
\begin{pmatrix}
X^{(i)} \\
Y^{(i)}
\end{pmatrix}$ \;
$\begin{pmatrix}
X^{(j)} \\
Y^{(j)}
\end{pmatrix}
 \gets \exp \left( (\textup{\texttt{ListTimes}}[k]-T^{(j)})\mathcal{A}\right)
\begin{pmatrix}
X^{(j)} \\
Y^{(j)}
\end{pmatrix}$ \;
$m_{ij} \gets (y_i + z_i - y_j - z_j)$\tcp*[r]{Message exchanged.} 
$z_i \gets z_i - \beta m_{ij}$\;
$\tilde z_i \gets \tilde z_i -\tilde \beta m_{ij}$\;
$z_j \gets z_j + \beta m_{ij}$\;
$\tilde z_j \gets \tilde z_j + \tilde \beta m_{ij}$\;
$T^{(i)}\gets \texttt{ListTimes}[k]$\;
$T^{(j)}\gets \texttt{ListTimes}[k]$\;
}

}
\Return{$(x_i)_{1 \leq i \leq n}$, the estimate of $x^*$ on each worker $i$.}
\end{algorithm}
}
\end{center}

%%%%%%%%%%%%%%%%%%%%%%%%%%%%%%%%%%%%%%%%%%%%%%%%%%%%%%%%%%%%%%%%%%%%%%%%%%%%%%%
%%%%%%%%%%%%%%%%%%%%%%%%%%%%%%%%%%%%%%%%%%%%%%%%%%%%%%%%%%%%%%%%%%%%%%%%%%%%%%%

\end{document}